\newtheorem{theorem}{Theorem}
\newtheorem*{theorem*}{Theorem}
\newtheorem{lemma}{Lemma}
\newtheorem*{corollary*}{Corollary}
\newcommand{\fS}{\mathfrak{S}}
\newcommand{\cA}{\mathcal{A}}
\begin{document}
\title{Sifting Limits for the $\Lambda^{2}\Lambda^{-}$ Sieve}
\author 
{C.S. Franze}
\date{\today}
\begin{abstract}
Sifting limits for the $\Lambda^{2}\Lambda^{-}$ sieve, Selberg's lower bound sieve, are computed for integral dimensions $1<\kappa\le10$. The evidence strongly suggests that for all $\kappa\ge3$ the $\Lambda^{2}\Lambda^{-}$ sieve is superior to the competing combinatorial sieves of Diamond, Halberstam, and Richert. A method initiated by Grupp and Richert for computing sieve functions for integral $\kappa$ is also outlined. 
\end{abstract}
\maketitle
\section{Introduction}
Let $\mathcal{A}$ be a sequence of integers, and $\mathcal{P}$ a set of primes. Recall that the goal of the sieve method is to obtain bounds for 
\begin{equation}\label{SAP}
S(\mathcal{A},\mathcal{P},z)=\sum_{\substack{n\in\mathcal{A}\\(n,P(z))=1}}1,
\end{equation}
where
$$P(z)=\prod_{\substack{p\in\mathcal{P}\\p<z}}p.$$ 
One expects that the sequence $\mathcal{A}$ is well-behaved in that $\mathcal{A}_{d}$, the elements of $\mathcal{A}$ divisible by $d$, satisfy
\begin{equation}\label{seqA}
\left|\mathcal{A}_{d}\right|=\frac{X}{f(d)}+\mathcal{R}_{d},
\end{equation}
where $f(d)$ is some multiplicative function, and the errors, $\mathcal{R}_{d}$, are relatively small, at least on average. In fact, suppose that there exists a constant $A\ge1$ such that
\begin{equation}\label{remainder}
\sum_{d<\frac{X}{\log^{A} X}}\mu^{2}(d)7^{\nu(d)}\left|\mathcal{R}_{d}\right|\ll\frac{X}{\log^{\kappa+1}X}.
\end{equation}
In addition, one assumes that
\begin{equation}\label{loghypoth}
\sum_{p<s}\frac{\log p}{f(p)}=\kappa\log s+O\left(1\right),
\end{equation}
and refers to $\kappa$ as the \textit{dimension}, or \textit{density}, of the sieve.\\

H. Diamond and H. Halberstam, in association with the late professor H.-E. Richert, constructed a class of sieves for all dimensions $\kappa\ge1$. Their sieves (DHR sieves for short) combine elements of Selberg's $\Lambda^2$ upper bound sieve and the combinatorial sieves of Rosser-Iwaniec. For an account of their work, we refer the reader to their recent book \cite{DHG08}. An important parameter in a sieve is the \textit{sifting limit} $\beta_\kappa$, beyond which the lower bound sieve yields a positive lower bound. The calculations in Chapter 17 of \cite{DHG08} show that for the DHR sieves, $\beta_\kappa \lesssim 2.44 \kappa$. 

Selberg investigated an alternative lower bound sieve method, known as the $\Lambda^2 \Lambda^-$ sieve, for large dimensions $\kappa$. The starting point for this sieve, similar to the $\Lambda^2$ upper bound sieve, is the observation that for any set of real numbers $\lambda_d$, normalized so that $\lambda_{1}=1$,
\begin{equation*}
S(\mathcal{A}, \mathcal{P}, z) \ge 
  \sum_{n\in \cA}
        \left( 1- \sum_{\substack{p|n \\ p < z }} 1 \right)\left( \sum_{\substack{d|n\\d|P(z)}} \lambda_d \right)^2.
\end{equation*}
Selberg proved that for sufficiently large $\kappa$, this sieve yields $\beta_\kappa \lesssim 2\kappa + 19/36$. As a consequence, the $\Lambda^2 \Lambda^-$ sieve is superior to the DHR sieves if $\kappa$ is taken sufficiently large. How large is sufficiently large? For small integer $\kappa$ with $2\le\kappa\le10$, we prove
\begin{theorem}\label{T:kappabiggerthan3}
Suppose $S\left(\mathcal{A},\mathcal{P},z\right)$ is as defined in \eqref{SAP}, and that $\mathcal{A}$ satisfies \eqref{seqA}, \eqref{remainder}, and \eqref{loghypoth}. Letting $\left|\mathcal{A}\right|=x$, and $z=x^{1/\beta_{\kappa}}$, we have  
\begin{equation*}
S\left(\mathcal{A},\mathcal{P},z\right)\gg\frac{x}{\log^{\kappa}x}
\end{equation*}
for pairs $\kappa$ and $\beta_{\kappa}$ listed in the table below.
\begin{center}
\vspace{0.125in}
\begin{tabular}{c}
\begin{tabular}{|c|c|c|c|c|c|c|c|c|c|}\hline
$\kappa$&$2$&$3$&$4$&$5$&$6$&$7$&$8$&$9$&$10$\\\hline
$\beta_{\kappa}$&$4.516$&$6.520$&$8.522$&$10.523$&$12.524$&$14.524$&$16.524$&$18.525$&$20.525$\\\hline
\end{tabular}
\end{tabular}
\end{center}
\end{theorem}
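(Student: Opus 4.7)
The plan is to start from the Selberg $\Lambda^2\Lambda^-$ inequality displayed just above the statement, expand the square, and decompose the right hand side into two pieces: a principal quadratic form
\[
T_1=\sum_{n\in\mathcal{A}}\Bigl(\sum_{\substack{d\mid n\\d\mid P(z)}}\lambda_d\Bigr)^2,
\]
identical with the classical $\Lambda^2$ upper bound form, and a subtractive piece
\[
T_2=\sum_{\substack{p<z\\p\in\mathcal{P}}}\ \sum_{\substack{n\in\mathcal{A}\\p\mid n}}\Bigl(\sum_{\substack{d\mid n\\d\mid P(z)}}\lambda_d\Bigr)^2,
\]
which is essentially a $\Lambda^2$-form on each subsequence $\mathcal{A}_p$. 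Applying \eqref{seqA} inside each term reduces both $T_1$ and $T_2$ to quadratic forms in the weights $\lambda_d$, weighted by the multiplicative function $f$, together with a controlled remainder.

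Next comes Selberg's optimization. Diagonalizing via the standard $g$-function transformation and restricting the $\lambda_d$ to squarefree $d\le D=x^{1/\beta_\kappa}/\log^{A}x$ with $\lambda_1=1$, one reduces the maximization of $T_1-T_2$ to a constrained problem that hypothesis \eqref{loghypoth} converts from sums to integrals against the $\kappa$-dimensional density. The condition for a positive lower bound then becomes an inequality of the form
\[
I_1(\beta_\kappa;\kappa)-I_2(\beta_\kappa;\kappa)>0,
\]
where $I_1$ and $I_2$ are explicit integrals involving the Selberg sieve function $F_\kappa$, its companion $f_\kappa$, and auxiliary kernels solving the usual delay-differential equations; a free inner combinatorial sieve, used to truncate the weights on the subtractive side, is also available and must be co-optimized.

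The third step is numerical. For each $\kappa\in\{2,\ldots,10\}$ I would compute $F_\kappa$, $f_\kappa$, and the integrals $I_1,I_2$ using the scheme of Grupp and Richert presented later in the paper. For \emph{integer} $\kappa$ this is particularly tractable, because on each unit interval $[j,j+1]$ the sieve functions are elementary combinations of polynomials and powers of $\log$, so that exact rational arithmetic is feasible. One tabulates $I_1-I_2$ as a function of $\beta$ and of the inner-sieve truncation, locates the smallest $\beta$ at which the difference becomes positive, and rounds up to three decimals; the values in the table are the output of this search. The remainder contribution is harmless: the products $\lambda_d\lambda_e$ are supported on squarefree moduli $q=[d,e]\le D^2$, and the subtractive piece costs at most an additional factor bounded by $7^{\nu(q)}$, so the hypothesis \eqref{remainder} absorbs the total error into $o(x/\log^{\kappa}x)$.

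The principal obstacle is not the analytic reduction, which follows Selberg's established treatment, but the numerical step: producing values of the sieve functions accurate enough to pin down $\beta_\kappa$ to three decimals while simultaneously optimizing over the upper truncation parameter and the inner combinatorial sieve feeding the weights. The Grupp-Richert computational method, specialized to integral $\kappa$ as outlined in the sequel, is exactly the machinery designed to carry out this optimization, and it is where the real weight of the argument falls.
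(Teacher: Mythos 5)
Your opening reduction is essentially the paper's: the choice $a_1=1$, $a_p=-1$ for $p<z$, the diagonalization to a quadratic form in the transformed weights, and the $7^{\nu(q)}$ accounting of the remainder all match \eqref{adch2}, Lemma 1, and \eqref{remainder}. But the middle of your argument goes off the rails in two concrete ways. First, the positivity condition for the $\Lambda^{2}\Lambda^{-}$ sieve does not involve the Rosser--Iwaniec/DHR functions $F_\kappa$, $f_\kappa$, nor is there any ``inner combinatorial sieve'' to co-optimize --- that hybrid is precisely the competing DHR construction, not Selberg's. Here the only free data is the sequence $\zeta_r$, taken as $\zeta_r=P\bigl(\log(\xi/r)/\log z\bigr)$ with $P$ linear, $P(w)=w+a$ (following Selberg), and the smoothing uses the asymptotic $G(r,z)\sim j_\kappa\bigl(\tfrac{\log r}{\log z}\bigr)/V(z)$, so the criterion becomes $\mathcal{I}_1-\kappa\mathcal{I}_2-\kappa\mathcal{I}_3-\kappa\mathcal{I}_4>0$ with all four integrals taken against $j_\kappa'(u-w)$, where $j_\kappa$ solves $uj'(u)=\kappa j(u)-\kappa j(u-1)$ with the $u^\kappa$ initial condition; the optimization is over the single parameter $a$ (and $u$, tied to $\beta_\kappa$ via $z\xi^2=x^{1-\varepsilon}$). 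Your proposal never specifies the weight shape at all, yet the tabulated $\beta_\kappa$ depend on exactly this choice.

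Second, and more seriously, your claim that integer $\kappa$ makes the computation ``particularly tractable, because on each unit interval the sieve functions are elementary combinations of polynomials and powers of $\log$, so that exact rational arithmetic is feasible'' is false, and it hides the entire technical content of the proof. Already $K_2(u)=\int_2^u\log(t-1)\,dt/t$ is a dilogarithm-type integral, and the iterated kernels $K_n(u,\lambda)$ appearing in Wheeler's decomposition $j_\kappa'(u)=\frac{e^{-\kappa\gamma}}{\Gamma(\kappa)}\sum_{0\le n<u}(-\kappa)^nK_n(u,\kappa-1)$ have no elementary closed form beyond the first couple of intervals. The paper's real work is a rigorous numerical scheme replacing your ``exact arithmetic'': coefficient bounds $|b_j(n,c)|\le 4/(1+c)^j$ for the local power series of $K_n$, the Grupp--Richert chain-of-circles continuation with explicit control of how truncation error propagates from one disc to the next (Lemmas 5--8), and the extension of all of this to $\lambda=\kappa-1\neq0$, so that with $N=80$ the computed $\widetilde{I}\approx10^{-5}$--$10^{-6}$ exceeds a certified error of order $10^{-12}$--$10^{-23}$. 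Without an error-controlled evaluation of $j_\kappa'$ (or an equivalent verified scheme), locating $\beta_\kappa$ to three decimals is not justified, so as written your proposal has a genuine gap exactly where you acknowledge ``the real weight of the argument falls.'' A smaller loose end: the boundedness of $\lambda_d/\lambda_1$, which you need to invoke \eqref{remainder}, also comes from the explicit polynomial choice via \eqref{supbound}, so it cannot be asserted independently of specifying the weights.
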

Thus, Selberg's sifting limit is approached rapidly from below. Indeed, although we have restricted the argument to integer $2\le\kappa\le10$, we expect that $\beta_{\kappa}\le2\kappa+19/36$ for all $\kappa$. When compared with the DHR sieves, the $\Lambda^2\Lambda^-$ sieve gives a better sifting limit $\beta_{\kappa}$ for integral $\kappa\ge 3$. The table below gives a comparison of the two sieves.
\begin{center}
Table 1. Sifting Limit Comparison\\
\vspace{0.125in}
\begin{tabular}{c}
\begin{tabular}{|c|c|c|c|c|c|c|c|c|c|}\hline
$\kappa$&2&3&4&5&6&7&8&9&10\\\hline
DHR $\beta_{\kappa}$&4.266&6.640&9.072&11.534&14.014&16.504&18.998&21.495&23.992\\\hline
$\Lambda^{2}\Lambda^{-}\ \beta_{\kappa}$&4.516&6.520&8.522&10.523&12.524&14.524&16.524&18.525&20.525\\\hline
\end{tabular}\label{table1ch2}
\end{tabular}
\end{center}
\vspace{0.125in}
More improvements are certainly possible. Recently, Sara Blight \cite[pp. 28-29]{Blight} has shown that $\beta_{2}<4.45$, $\beta_{3}<6.458$, and $\beta_{4}<8.47$. Her work features a set of weights that take into account numbers composed of up to three prime factors. These weights were suggested by Selberg as a modification to the $\Lambda^{2}\Lambda^{-}$ sieve.

One interesting application of these sieves is to almost-primes in polynomial sequences. In a forthcoming paper, the author will show that a weighted $\Lambda^{2}\Lambda^{-}$ sieve is capable of producing better results than the weighted DHR sieves when the polynomial is a product of linear irreducible factors, for example. However, the DHR sieves still perform quite well in the higher dimensional setting when the irreducible factors of the polynomial are each of a large degree, owing to the optimal nature of the DHR construction when $\kappa=1$. 

\section{Sieve Setup}
Following Selberg, we define $f':=f\ast\mu$ and let $\lambda_{d}$ be an arbitrary sequence of real numbers with the property that $\lambda_{d}=0$ if $d$ is not squarefree, or if $d>\xi$. Next, define $\zeta_{r}$ by the relation
\begin{equation}\label{E:zetarfirst}
\frac{\mu(r)\zeta_{r}}{f'(r)}=\sum_{d}\frac{\lambda_{dr}}{f(dr)}.
\end{equation}
By M\"obius inversion, we also have
\begin{equation*}
\frac{\mu(d)\lambda_{d}}{f(d)}=\sum_{r}\frac{\zeta_{dr}}{f'(dr)}.
\end{equation*}
In the classical Selberg sieve, the $\zeta_{r}$ are constant.

Assume that $\lambda_{1}\neq0$, and let $\lambda'_{d}=\frac{\lambda_{d}}{\lambda_{1}}$. Since $\lambda'_{1}=1$,
$$\sum_{\substack{n\in\mathcal{A}\\(n,P(z))=1}}1\ge\sum_{n\in\mathcal{A}}\left(1-\sum_{\substack{p|n\\p<z}}1\right)\left(\sum_{\substack{\nu|n\\\nu\mid P(z)}}\lambda'_\nu \right)^2.$$
The right-hand side can be rearranged using a well-known identity. In particular, we have
\begin{lemma}
With $\zeta_{r}$ defined as in \eqref{E:zetarfirst}, we have
\begin{align}\label{E:Sievech2}
\sum_{n\in\mathcal{A}}\left(\sum_{\substack{d|n\\d\mid P(z)}}a_d\right)\left(\sum_{\substack{\nu|n\\\nu\mid P(z)}}\lambda_\nu \right)^2=|\mathcal{A}| \fS_{\mathcal{A}}+\mathfrak{E}_{\mathcal{A}},
\end{align}
where
\begin{align}\label{E:Mainch2}
\fS_{\mathcal{A}}=\sum_{m} \sum_{\substack{d\\(d,m)=1}}\frac{\mu^2(m)}{f'(m)}\frac{a_d}{f(d)}\left( \sum_{r|d} \mu(r) \zeta_{rm} \right)^{2},
\end{align}
and
\begin{align}\label{E:Errorch2}
\mathfrak{E}_{\mathcal{A}}=\sum_{d,\nu_{1},\nu_{2}\mid P(z)}a_{d}\lambda_{\nu_{1}}\lambda_{\nu_{2}}\mathcal{R}_{\left[d,\nu_{1},\nu_{2}\right]}.
\end{align}
\end{lemma}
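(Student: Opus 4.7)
The plan is to expand the left-hand side by linearity, apply the hypothesis \eqref{seqA} on $|\mathcal{A}_{d}|$ to peel off a main term and the error $\mathfrak{E}_{\mathcal{A}}$, and then reduce the main term to the classical Selberg quadratic-form identity after factoring out the $d$-variable.

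Swapping the order of summation on the left-hand side gives $\sum_{d,\nu_{1},\nu_{2}\mid P(z)} a_{d}\lambda_{\nu_{1}}\lambda_{\nu_{2}}\,|\mathcal{A}_{[d,\nu_{1},\nu_{2}]}|$, and inserting $|\mathcal{A}_{e}|=|\mathcal{A}|/f(e)+\mathcal{R}_{e}$ reads off $\mathfrak{E}_{\mathcal{A}}$ in the form \eqref{E:Errorch2} immediately. What remains is the purely formal identity
\[
\sum_{d,\nu_{1},\nu_{2}}\frac{a_{d}\lambda_{\nu_{1}}\lambda_{\nu_{2}}}{f([d,\nu_{1},\nu_{2}])}=\fS_{\mathcal{A}}.
\]

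To establish this, I first peel off the $d$-variable. Since $\lambda_{\nu_{i}}$ is supported on squarefree $\nu_{i}$, decompose each $\nu_{i}=\delta_{i}\mu_{i}$ uniquely with $\delta_{i}=(\nu_{i},d)\mid d$ and $(\mu_{i},d)=1$. A prime-by-prime check gives $[d,\nu_{1},\nu_{2}]=d\cdot[\mu_{1},\mu_{2}]$ with $(d,[\mu_{1},\mu_{2}])=1$, so $f([d,\nu_{1},\nu_{2}])=f(d)f([\mu_{1},\mu_{2}])$ by multiplicativity. The sum becomes
\[
\sum_{d}\frac{a_{d}}{f(d)}\sum_{\substack{(\mu_{1},d)=1\\(\mu_{2},d)=1}}\frac{c_{\mu_{1}}c_{\mu_{2}}}{f([\mu_{1},\mu_{2}])},\qquad c_{\mu}:=\sum_{\delta\mid d}\lambda_{\delta\mu}.
\]
The inner sum is then handled by the standard Selberg diagonalization: using $f(n)=\sum_{e\mid n}f'(e)$ for squarefree $n$ together with $f([\mu_{1},\mu_{2}])=f(\mu_{1})f(\mu_{2})/f((\mu_{1},\mu_{2}))$, it collapses to $\sum_{(m,d)=1} y_{m}^{2}/f'(m)$, where $y_{m}:=f'(m)\sum_{(k,d)=1} c_{mk}/f(mk)$.

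The main algebraic obstacle will be to verify that $y_{m}=\mu(m)\sum_{r\mid d}\mu(r)\zeta_{rm}$; once this is in hand, squaring absorbs the sign $\mu(m)$ and produces exactly the summand of $\fS_{\mathcal{A}}$. Unfolding $c_{mk}$ and reindexing by $\nu=\delta k$ (a bijection onto squarefree $\nu$ coprime to $m$, with $\delta=(\nu,d)$) yields $y_{m}=f'(m)\sum_{(\nu,m)=1}\lambda_{\nu m}/f(m\nu/(\nu,d))$. Conversely, expanding $\zeta_{rm}=\mu(rm)f'(rm)\sum_{e}\lambda_{erm}/f(erm)$, using $(d,m)=1$ to factor $\mu(rm)=\mu(r)\mu(m)$ and $f'(rm)=f'(r)f'(m)$, and then reindexing $\nu=er$ with $r\mid(d,\nu)$ while applying the two identities $\sum_{r\mid(d,\nu)}f'(r)=f((d,\nu))$ and $f(\nu m)/f((d,\nu))=f(m\nu/(\nu,d))$ produces the same expression up to the factor $\mu(m)$. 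The chief difficulty throughout is keeping the two M\"obius inversions straight---the one defining $\zeta_{r}$ and the one implicit in the Selberg diagonalization---so that the indices $r\mid d$ and $m$ coprime to $d$ interact correctly.
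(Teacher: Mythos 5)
Your proof is correct: the expansion of the left-hand side, the splitting via \eqref{seqA} (with the paper's implicit identification $X=|\mathcal{A}|$), the factorization $[d,\nu_{1},\nu_{2}]=d\,[\mu_{1},\mu_{2}]$ with $f$ multiplicative, the diagonalization through $f=f'\ast 1$, and the verification that $y_{m}=\mu(m)\sum_{r\mid d}\mu(r)\zeta_{rm}$ (I checked the unfolding of $\zeta_{rm}$, the reindexing $\nu=er$ with $r\mid(d,\nu)$, and the use of $\sum_{r\mid(d,\nu)}f'(r)=f((d,\nu))$) all go through for squarefree $d,\nu_{1},\nu_{2}\mid P(z)$. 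The paper itself does not prove this lemma but refers to Selberg, Bombieri, Cojocaru--Murty, and Greaves; your argument is essentially the standard diagonalization proof found in those sources, so there is nothing methodologically different to compare.
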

For our purposes, we divide both sides of this identity by $\lambda_{1}^2$ and choose
\begin{equation}\label{adch2}
a_{d}=
\begin{cases}
\phantom{+}1,& \text{if $d=1$,}\\
-1,& \text{if $d$ is prime and $d<z$,}\\
\phantom{+}0,& \text{otherwise.}
\end{cases}
\end{equation}
The identity in \eqref{E:Sievech2} distinguishes \eqref{E:Mainch2} as the main term and \eqref{E:Errorch2} as the error term for the sum. This identity is the starting point of the $\Lambda^{2}\Lambda^{-}$ method and has appeared in various forms in the works of Selberg \cite[See Section 7 on p.82]{Selberg}, Bombieri \cite[See Theorem 18 on p.65]{Bombieri}, Cojocaru and Murty \cite[See Theorem 10.1.1 on p.178]{CoM}, Greaves \cite[See Lemma 1 on p.286]{Greaves}, and others.

To produce a positive lower bound for \eqref{SAP} we will show that 
$$|\mathcal{A}| \frac{\fS_{\mathcal{A}}}{\lambda_{1}^2}+\frac{\mathfrak{E}_{\mathcal{A}}}{\lambda_{1}^2}\gtrsim \left|\mathcal{A}\right|V(z)\left(c+o\left(1\right)\right),$$
where $c$ is some small positive constant, and
$$V(z)=\prod_{p<z}\left(1-\frac{1}{f(p)}\right).$$
To begin, suppose that $\left|\mathcal{A}\right|=x$, and let $z=x^{1/u}$. It is easy to see that
$$V(z)^{-1}\ll\log^{\kappa}x.$$
Next, choosing
\begin{equation}\label{Error Control}
z\xi^2=x^{1-\varepsilon}, 
\end{equation}
and recalling \eqref{remainder}, we have
\begin{align*}
\frac{\mathfrak{E}_{\mathcal{A}}}{\lambda_{1}^2}&\ll\sum_{\substack{m<z\xi^{2}\\m\mid P(z)}}\left|\mathcal{R}_{m}\right|\sum_{\substack{d,\nu_{1},\nu_{2}\\\left[d,\nu_{1},\nu_{2}\right]=m}}1=\sum_{\substack{m<z\xi^{2}\\m\mid P(z)}}7^{\nu(m)}\left|\mathcal{R}_{m}\right|\\
&\ll\sum_{m<z\xi^{2}}\mu^2(m)7^{\nu(m)}\left|\mathcal{R}_{m}\right|\ll\frac{x}{\log^{\kappa+1}x}.
\end{align*}
Here we have used the fact that $\frac{\lambda_{d}}{\lambda_{1}}$ is bounded, which will be explained below. 

The $\zeta_{r}$ will be chosen as
$$\zeta_{r}=P\left(\frac{\log \xi/r}{\log z}\right),$$
where $P(w)$ is a polynomial that is positive for $0\le w\le u$. Therefore, 
\begin{equation}\label{B:firstlambdabound}
\lambda_{1}=\sum_{r<\xi}\frac{\zeta_{r}}{f'(r)}\le\sup_{0\le w\le u}P\left(w\right)\sum_{\substack{r<\xi\\ r\mid P(z)}}\frac{1}{f'(r)}\ll\sum_{r\mid P(z)}\frac{1}{f'(r)}=\frac{1}{V(z)}.
\end{equation}
In the case when $\zeta_{r}=1$, the $\lambda_{\nu}$ are well-understood. We will refer to this choice of $\lambda_{\nu}$ as $\widetilde{\lambda}_{\nu}$. It is known, for example, that $\left|\widetilde{\lambda}_{\nu}\right|\le\left|\widetilde{\lambda}_{1}\right|$. Since
\begin{align}\label{E:polyP}
\left|\lambda_{d}\right|\le\sup_{0\le w\le u}\left|P(w)\right|\widetilde{\lambda}_{1},
\end{align}
and
$$\lambda_{1}=\sum_{\substack{r<\xi\\r\mid P(z)}}\frac{\mu^{2}(r)}{f'(r)}P\left(\frac{\log \xi/r}{\log r}\right)\ge\inf_{0\le w\le u}P(w)\widetilde{\lambda}_{1},$$
it is clear that
\begin{equation}\label{supbound}
\frac{\left|\lambda_{\nu}\right|}{\left|\lambda_{1}\right|}\le\frac{\displaystyle\sup_{0\le w\le u}|P(w)|}{\displaystyle\inf_{0\le w\le u}|P(w)|}.
\end{equation}
It follows that the sequence
$$\lambda_{\nu}'=\frac{\lambda_{\nu}}{\lambda_{1}}$$
is bounded.

Finally, since 
$$|\mathcal{A}| \frac{\fS_{\mathcal{A}}}{\lambda_{1}^2}+\frac{\mathfrak{E}_{\mathcal{A}}}{\lambda_{1}^2}=|\mathcal{A}|V(z)\left(\frac{\mathfrak{S}_{\mathcal{A}}V(z)}{\left(\lambda_{1}V(z)\right)^{2}}+\frac{1}{|\mathcal{A}|V(z)}\frac{\mathfrak{E}_{\mathcal{A}}}{\lambda_{1}^2}\right),$$
we have
$$|\mathcal{A}| \frac{\fS_{\mathcal{A}}}{\lambda_{1}^2}+\frac{\mathfrak{E}_{\mathcal{A}}}{\lambda_{1}^2}=|\mathcal{A}|V(z)\left(\frac{\mathfrak{S}_{\mathcal{A}}V(z)}{\left(\lambda_{1}V(z)\right)^{2}}+O\left(\frac{1}{\log x}\right)\right).$$
We showed in \eqref{B:firstlambdabound} that $\lambda_{1}V(z)$ is bounded, and so our priority is in the analysis of $\mathfrak{S}_{A}V(z)$.

\section{Analysis of the Main Term}

In this section, we will treat the expression $\mathfrak{S}_{\mathcal{A}}$ occurring in the main term of the $\Lambda^{2}\Lambda^{-}$ lower bound sieve. First, let us recall that with Selberg's choice of weights $a_d$ in \eqref{adch2} we have that 
$$\fS_{\mathcal{A}}>\sum_{\substack{m<\xi\\ m|P(z)}}\frac{\mu^2(m)}{f'(m)}\zeta^2_m-\sum_{\substack{m<\xi\\ m|P(z)}}\frac{\mu^2(m)}{f'(m)} 
\sum_{p<z}\frac{1}{f(p)}\left(\zeta_m-\zeta_{pm}\right)^2,$$
upon omitting the condition that $(m,p)=1$ in \eqref{E:Mainch2}. We wish to smooth this expression using the asymptotic formulas for
\begin{equation}\label{E:G estimate}
G\left(r,z\right)=\sum_{\substack{m<r\\m\mid P(z)}}\frac{\mu^2(m)}{f'(m)}\sim\frac{j_{\kappa}\left(\frac{\log r}{\log z}\right)}{V(z)}, 
\end{equation}
and
\begin{equation}\label{E:H estimate}
H(s)=\sum_{p<s}\frac{\log p}{f(p)}\sim\kappa\log s,
\end{equation}
where $j_{\kappa}(u)$ is the continuous solution of the differential delay equation 
\begin{equation}\label{E:prop2}
uj'(u)=\kappa j(u)-\kappa j(u-1),
\end{equation}
for $u>1$, with
\begin{equation}\label{E:prop1}
j(u)=
	\begin{cases}
	   \displaystyle\frac{e^{-\gamma\kappa}}{\Gamma(\kappa+1)}u^{\kappa}, & \text{if $0<u\le1$,}\\
	    \phantom{+}0, & \text{if $u\le0$}.
	 \end{cases}
\end{equation}
We remark that if $\kappa$ is held fixed, then $j_{\kappa}(u)$ increases to $1$. Now, using Riemann-Stieltjes integration and replacing the integrators with their corresponding smooth approximations in \eqref{E:G estimate} and \eqref{E:H estimate}, we expect that  
\begin{align}\label{asym}
\fS_{\mathcal{A}}\gtrsim\frac{1}{V(z)}\int_{1}^{\xi}\zeta_{r}^{2}dj_{\kappa}\left(\frac{\log r}{\log z}\right)-\frac{\kappa}{V(z)}\int_{1}^{\xi}\int_{1}^{z}\left(\zeta_{r}-\zeta_{sr}\right)^{2}\frac{d\log s}{\log s}dj_{\kappa}\left(\frac{\log r}{\log z}\right).
\end{align}
This is indeed the case since, more specifically, if one regards $\kappa$ and $u:=\frac{\log\xi}{\log z}\ge1$ as fixed, then one has
\begin{equation}\label{E:G precise estimate}
G\left(r,z\right)=\sum_{\substack{m<r\\m\mid P(z)}}\frac{\mu^2(m)}{f'(m)}=\frac{j_{\kappa}\left(\frac{\log r}{\log z}\right)}{V(z)}\left(1+O\left(\frac{1}{\log z}\right)\right), 
\end{equation}
and
\begin{equation}\label{E:H precise estimate}
H(s)=\sum_{p<s}\frac{\log p}{f(p)}=\kappa\log s+O\left(1\right),
\end{equation}
making the error in \eqref{asym} of order at most $(V(z)\log z)^{-1}$. The formula in \eqref{E:H precise estimate} is merely our assumed density hypothesis in \eqref{loghypoth}. On the other hand, the bound in \eqref{E:G precise estimate} is a consequence of 
\begin{lemma}\label{hr74asymptoticlemma}
For any $\tau=\frac{\log r}{\log z}>0$, we have
\begin{equation*}
\frac{1}{G\left(r,z\right)}=V(z)\left(\frac{1}{j_{\kappa}(\tau)}+O\left(\frac{\tau^{2\kappa+1}}{\log z}\right)\right).
\end{equation*}
\end{lemma}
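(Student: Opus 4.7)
The plan is to prove the equivalent forward asymptotic
\[
V(z)\,G(r,z) \;=\; j_{\kappa}(\tau)\Bigl(1 + O\bigl(\tau^{2\kappa+1}/\log z\bigr)\Bigr)
\]
and then invert. Inversion costs nothing because the complete sum telescopes as $G(\infty,z)=V(z)^{-1}$ (expanding the Euler product $V(z)^{-1}=\prod_{p<z}(1+1/f'(p))$), so $V(z)G(r,z)\le 1$ and the relative error in the reciprocal has the same order; multiplying through by $V(z)/j_{\kappa}(\tau)$ turns this into the stated absolute error, using that $j_{\kappa}(\tau)$ is bounded below by a positive multiple of $\tau^{\kappa}$ on $(0,1]$ and by $j_{\kappa}(1)>0$ beyond.

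For the base range $0<\tau\le 1$ the condition $m\mid P(z)$ is automatic since $m<r\le z$, so $G(r,z)=\sum_{m<r}\mu^{2}(m)/f'(m)$. Partial summation combined with the squarefree identity $\log m=\sum_{p\mid m}\log p$ and the hypothesis \eqref{loghypoth} (noting $1/f'(p)=1/f(p)+O(1/f(p)^{2})$, which preserves $H(s)=\kappa\log s+O(1)$) yields
\[
V(z)\,G(r,z) \;=\; \frac{e^{-\gamma\kappa}}{\Gamma(\kappa+1)}\,\tau^{\kappa}+O(1/\log z),
\]
which matches the explicit formula for $j_{\kappa}$ from \eqref{E:prop1} and establishes the base case.

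For $\tau>1$ I would induct on $k=\lceil\tau\rceil$. Weighting the defining sum by $\log r$ and writing $\log r=\log m+\log(r/m)$, then expanding $\log m=\sum_{p\mid m}\log p$ and factoring $m=pn$ with $(n,p)=1$, produces the identity
\[
(\log r)\,G(r,z) \;=\; \int_{1}^{r}G(t,z)\,\frac{dt}{t}+\sum_{p<z}\frac{\log p}{f'(p)}\,G(r/p,z;p),
\]
where $G(\cdot,z;p)$ carries the additional restriction $(m,p)=1$. Removing this restriction by Möbius inversion, and then converting the prime sum into an integral by partial summation against \eqref{E:H precise estimate}, turns the identity into the integrated form of the differential--delay equation \eqref{E:prop2} satisfied by $V(z)^{-1}j_{\kappa}(\tau)$. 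Substituting the inductive estimate on the smaller range $(0,k-1]$ and comparing produces the desired estimate on $(k-1,k]$.

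The principal obstacle is bookkeeping the $O(1/\log z)$ error from \eqref{loghypoth} as it propagates through the induction. Each Buchstab-type step multiplies the existing error by a $\tau$-dependent factor, so a naive Grönwall argument would give an exponential blow-up; the polynomial bound $\tau^{2\kappa+1}$ emerges only after exploiting the self-canceling structure of the delay equation and the fact that $j_{\kappa}$ is bounded. The cleanest way to keep the extra $\tau$-factor coming from the coprimality condition $(m,p)=1$ under control during Möbius inversion is the technical heart of the argument, and it is this step that dictates the precise exponent $2\kappa+1$.
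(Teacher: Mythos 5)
First, a point of reference: the paper does not prove this lemma at all --- it is quoted from Halberstam and Richert's \emph{Sieve Methods} (the passage cited around p.~197), where the asymptotic for $G(r,z)$ against $j_{\kappa}$ is established by exactly the kind of integral-equation comparison you outline. So your strategy (base range $0<\tau\le 1$, then a Buchstab-type identity $(\log r)G(r,z)=\int_1^r G(t,z)\,dt/t+\sum_{p<z}\frac{\log p}{f'(p)}G(r/p,z;p)$, compared with the integrated form of \eqref{E:prop2}) is the historically correct route, and the identity itself is right. But as written the proposal is an outline, not a proof, and the two places where it stops are precisely the places where all the work lies. The base case is not a consequence of ``partial summation combined with \eqref{loghypoth}'': extracting the constant $e^{-\gamma\kappa}/\Gamma(\kappa+1)$ in $V(z)G(r,z)=j_{\kappa}(\tau)+O(1/\log z)$ requires a Mertens/Levin--Fa\u{\i}nle\u{\i}b type analysis of the multiplicative function $\mu^{2}/f'$ together with the matching asymptotic for $V(z)$ (and, incidentally, more than \eqref{loghypoth} alone, since one needs $\sum_p f(p)^{-2}$ under control; also your claim that $m\mid P(z)$ is automatic for $m<r\le z$ fails when $\mathcal{P}$ is not the set of all primes). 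This base case is itself essentially the content of the cited Halberstam--Richert section.

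The more serious gap is the inductive step. Your identity expresses $G$ at height $\tau$ in terms of $G$ at heights in $(0,\tau]$, \emph{including} the interval $(k-1,k]$ you are trying to establish: both $\int_1^r G(t,z)\,dt/t$ and the delayed term $\sum_p$ (whose arguments lie in $(\tau-1,\tau)$) live partly in the current range. So one does not ``substitute the inductive estimate and compare''; one obtains a Volterra-type integral inequality on the current interval and must run an iteration/Gr\"onwall argument in which the $O(1/\log z)$ input error, the error from replacing $f'$ by $f$, the error from removing the coprimality condition $(m,p)=1$, and the boundary errors in partial summation against \eqref{E:H precise estimate} are all propagated with explicit $\tau$-dependence. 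You acknowledge this and assert that the exponent $2\kappa+1$ ``emerges'' from the self-cancelling structure of the delay equation, but this is exactly the technical heart of the lemma and it is not carried out; without it the stated error term $O\left(\tau^{2\kappa+1}/\log z\right)$ is unsubstantiated. (By contrast, the inversion step at the end is harmless: once the forward relative estimate is known, dividing by $j_{\kappa}(\tau)\gg\min(\tau,1)^{\kappa}$ only improves the exponent, and in the regime $\tau^{2\kappa+1}\ge\log z$ the statement is essentially trivial since $1/G\asymp V(z)$ there.) In short: right roadmap, but the two load-bearing estimates --- the base-case asymptotic with its uniform error and the error propagation through the delay equation --- are asserted rather than proved, so the proposal does not yet constitute a proof of Lemma \ref{hr74asymptoticlemma}.
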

Lemma \ref{hr74asymptoticlemma} is discussed in some detail in Halberstam and Richert\cite[See Section 4 on p.197]{HR74}. Now, let us define 
$$u=\frac{\log\xi}{\log z}\ge1,$$
and 
\begin{equation}\label{E:zeta r}
\zeta_{r}=P^{*}\left(\frac{\log\xi/r}{\log z}\right),
\end{equation}
where 
\begin{equation}\label{E:P star}
P^{*}\left(\frac{\log\xi/r}{\log z}\right):=
	\begin{cases}
	 P\left(\frac{\log \xi/r}{\log z}\right) & \text{if $r<\xi$,} \\
	    \phantom{+}0 & \text{if $r\ge\xi$}.
	\end{cases}
\end{equation}
is a polynomial in the range $r<\xi$. Using these definitions simplify the integrals occurring in the analysis of $\mathfrak{S}_{\mathcal{A}}$, and making the variable change $v=\frac{\log r}{\log z}$, and $t=\frac{\log s}{\log z}$, in \eqref{asym}, we have 
$$\fS_{\mathcal{A}}\gtrsim\frac{1}{V(z)}\left(\mathcal{I}_{1}-\kappa\mathcal{I}^{*}_{2}\right),$$
where
$$\mathcal{I}_{1}=\int_{0}^{u}P^{*}\left(u-v\right)^{2}j_{\kappa}'\left(v\right)dv,$$
and
$$\mathcal{I}^{*}_{2}=\int_{0}^{u}\int_{0}^{1}\left(P^{*}\left(u-v\right)-P^{*}\left(u-v-t\right)\right)^{2}\frac{dt}{t}j_{\kappa}'\left(v\right)dv.$$
Furthermore, after making the change of variable $w=u-v$, and using \eqref{E:P star}, these integrals further simplify to
$$\mathcal{I}_{1}=\int_{0}^{u}P\left(w\right)^{2}j_{\kappa}'(u-w)dw,$$
and
$$\mathcal{I}^{*}_{2}=\int_{0}^{u}\int_{0}^{1}\left(P\left(w\right)-P^{*}\left(w-t\right)\right)^{2}\frac{dt}{t}j_{\kappa}'(u-w)dw.$$
The latter integral must be further dissected to account for the vanishing of the function $P^{*}$ in the range $w\le t\le 1$. As a result, the region in the latter integral naturally splits into three distinct pieces. After splitting the range of integration to account for this, we find that   
\begin{equation}\label{E:refined main term}
\fS_{\mathcal{A}}\gtrsim \frac{1}{V(z)}\left(\mathcal{I}_{1}-\kappa\mathcal{I}_{2}-\kappa\mathcal{I}_{3}-\kappa\mathcal{I}_{4}\right),
\end{equation}
where
\begin{align}
\mathcal{I}_{1}&=\int_{0}^{u}P\left(w\right)^{2}j_{\kappa}'\left(u-w\right)dw,\label{E:cal I1}\\
\mathcal{I}_{2}&=\int_{1}^{u}\int_{0}^{1}\left(P\left(w\right)-P\left(w-t\right)\right)^{2}\frac{dt}{t}j_{\kappa}'\left(u-w\right)dw,\label{E:cal I2}\\
\mathcal{I}_{3}&=\int_{0}^{1}\int_{0}^{w}\left(P\left(w\right)-P\left(w-t\right)\right)^{2}\frac{dt}{t}j_{\kappa}'\left(u-w\right)dw,\label{E:cal I3}\\
\mathcal{I}_{4}&=\int_{0}^{1}\int_{w}^{1}P\left(w\right)^{2}\frac{dt}{t}j_{\kappa}'\left(u-w\right)dw.\label{E:cal I4}
\end{align}
Contrary to initial appearances, the innermost integral in \eqref{E:cal I2} and \eqref{E:cal I3} does not have a singularity at $t=0$ because the constant term does not appear in the difference $P\left(w\right)-P\left(w-t\right)$. The next step is to employ a device of Grupp and Richert to evaluate these integrals. Before moving on, let us remark that if $u:=\kappa-1/3-d$, and $\kappa$ is taken sufficiently large, Selberg\cite[See pp.174-176]{Selberg} has shown that if one sets $P(w)=w+a$, one has
$$\fS_{\mathcal{A}}\gtrsim\frac{1}{V(z)}\left(-a^2+\frac{1}{2}a-\frac{(2+9d)}{18}\right)\sqrt{\frac{\kappa}{\pi}}.$$
Choosing $a$ so that $d$ is as large as possible with $-a^2+\frac{1}{2}a-\frac{2+9d}{18}>0$, we see that the optimal choice is $a=1/4$, which implies that a positive lower bound is achieved when $d<-7/72$. A slightly more complicated argument that involves a more sophisticated set of weights will give $d\le -7/72$, and this is enough to show that the sifting limit $\beta_{\kappa}\gtrsim2u+1=2\kappa+\frac{19}{36}$, upon taking $d=-7/72$. The weights that achieve this involve divisors of $n$ consisting of two and three prime factors. As the author's investigations of the use of higher degree polynomials in this problem has not met with much success, we will follow Selberg and restrict our attention to linear polynomials as well. 

\section{The $K_{n}(u,\lambda)$ Functions}

In order to evaluate the integrals arising in our sieve, we will need to decompose $j_{\kappa}'$. In his dissertation, Wheeler \cite[See Proposition 3.1.1 on p.18]{Wheeler} noted that $j_{\kappa}$, as well as its derivatives, could be decomposed into a sum of simpler functions $K_{n}(u,\lambda)$, each defined for $\lambda>-1$ and $n\ge0$. More specifically, we have
\begin{equation}\label{E:jdecom}
j_{\kappa}(u)=\frac{e^{-\kappa\gamma}}{\Gamma(\kappa+1)}\sum_{0\le n<u}(-\kappa)^{n}K_{n}(u,\kappa).
\end{equation}
The sequence of functions $K_{n}(u,\lambda)$ is defined by the equations
\begin{equation}\label{E:Kprop3}
K_{0}(u,\lambda)=u^{\lambda} \qquad u>0,\\
\end{equation}
and
\begin{equation}\label{E:1}
K_{n}(u,\lambda)=u^{\lambda}\int_{n}^{u}t^{-\lambda-1}K_{n-1}(t-1,\lambda)dt \qquad u>n\ge1.
\end{equation}
We also specify that these functions vanish if $u\le n$, and thus
\begin{equation}\label{E:Kprop2}
K_{n}(u,\lambda)=0 \qquad u\le n.
\end{equation}
To justify that the decomposition in \eqref{E:jdecom} is valid, one can verify that the expression on the right-hand side of \eqref{E:jdecom} satisfies the delay-differential equation in \eqref{E:prop2}. This follows from the observation that for $n\ge1$,
$$uK_{n}'(u,\kappa)=K_{n-1}(u-1,\kappa)+\kappa K_{n}(u,\kappa).$$
Upon separating the first term from the decomposition in \eqref{E:jdecom}, we have
\begin{align*}
&u\frac{e^{-\kappa\gamma}}{\Gamma(\kappa+1)}\sum_{0\le n<u}(-\kappa)^{n}K_{n}'(u,\kappa)\\
=&u\left(\frac{e^{-\kappa\gamma}}{\Gamma(\kappa+1)}K_{0}'(u,\kappa)+\frac{e^{-\kappa\gamma}}{\Gamma(\kappa+1)}\sum_{1\le n<u}(-\kappa)^{n}K_{n}'(u,\kappa)\right)\\
=&\kappa\frac{e^{-\kappa\gamma}}{\Gamma(\kappa+1)}K_{0}(u,\kappa)+\frac{e^{-\kappa\gamma}}{\Gamma(\kappa+1)}\sum_{1\le n<u}(-\kappa)^{n}uK_{n}'(u,\kappa)\\
=&\kappa\frac{e^{-\kappa\gamma}}{\Gamma(\kappa+1)}K_{0}(u,\kappa)+\frac{e^{-\kappa\gamma}}{\Gamma(\kappa+1)}\sum_{1\le n<u}(-\kappa)^{n}\left(K_{n-1}(u-1,\kappa)+\kappa K_{n}(u,\kappa)\right),
\end{align*}
which is
\begin{align*}
=&\kappa\frac{e^{-\kappa\gamma}}{\Gamma(\kappa+1)}\sum_{0\le n<u}(-\kappa)^{n}K_{n}(u,\kappa)-\kappa\frac{e^{-\kappa\gamma}}{\Gamma(\kappa+1)}\sum_{1\le n<u}(-\kappa)^{n-1}K_{n-1}(u-1,\kappa)\\
=&\kappa\frac{e^{-\kappa\gamma}}{\Gamma(\kappa+1)}\sum_{0\le n<u}(-\kappa)^{n}K_{n}(u,\kappa)-\kappa\frac{e^{-\kappa\gamma}}{\Gamma(\kappa+1)}\sum_{0\le n<u-1}(-\kappa)^{n}K_{n}(u-1,\kappa).
\end{align*}
Therefore, the expression occurring on the right-hand side of \eqref{E:jdecom} satisfies the same differential delay equation as the sieve function $j_{\kappa}(u)$. We will only be concerned with integral dimensions $\kappa$ throughout this discussion, and thus will focus on integral $\lambda>-1$. In fact, the most important case occurs when $\lambda=0$ and the following lemma will provide us with a useful tool to understand the cases when $\lambda\neq0$.
\begin{lemma}\label{E:DerivativeLemma}
If $\lambda\ge1$ and $n\ge0$, then
$$\frac{d}{du}K_{n}(u,\lambda)=\lambda K_{n}(u,\lambda-1)$$
\end{lemma}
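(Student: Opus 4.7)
The plan is to prove the identity by induction on $n \ge 0$, uniformly in $\lambda \ge 1$. The base case $n=0$ is immediate from \eqref{E:Kprop3}: differentiating $K_0(u,\lambda) = u^\lambda$ directly gives $\lambda u^{\lambda-1} = \lambda K_0(u, \lambda-1)$, which is well-defined because $\lambda - 1 \ge 0$.

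For the inductive step, I plan to compute $u\lambda K_n(u,\lambda-1)$ in two different ways and match them to the recursion for $uK_n'(u,\lambda)$ derived in the paragraph immediately preceding the lemma. On the one hand, applying the Leibniz rule directly to the defining integral \eqref{E:1} (or simply invoking the already-derived relation) gives
\begin{equation*}
uK_n'(u,\lambda) = K_{n-1}(u-1,\lambda) + \lambda K_n(u,\lambda).
\end{equation*}
On the other hand, starting from \eqref{E:1} at parameter $\lambda - 1$,
\begin{equation*}
u\lambda K_n(u,\lambda-1) = \lambda u^{\lambda}\int_n^{u} t^{-\lambda} K_{n-1}(t-1,\lambda-1)\, dt,
\end{equation*}
I will use the inductive hypothesis to rewrite the integrand as $K_{n-1}(t-1,\lambda-1) = \tfrac{1}{\lambda}\tfrac{d}{dt}K_{n-1}(t-1,\lambda)$, and then integrate by parts. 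The resulting boundary term at $t=n$ vanishes by \eqref{E:Kprop2} because $K_{n-1}(n-1,\lambda)=0$, while the boundary term at $t=u$ produces $K_{n-1}(u-1,\lambda)$. The surviving integral, after accounting for the derivative of $t^{-\lambda}$, reassembles into $\lambda K_n(u,\lambda)$ via \eqref{E:1}, yielding
\begin{equation*}
u\lambda K_n(u,\lambda-1) = K_{n-1}(u-1,\lambda) + \lambda K_n(u,\lambda) = uK_n'(u,\lambda).
\end{equation*}
Dividing through by $u > n \ge 0$ completes the induction.

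The main point to verify carefully is the legitimacy of the integration by parts and the vanishing of the boundary contribution at $t=n$. The former requires $K_{n-1}(t-1,\lambda)$ to be absolutely continuous on $[n,u]$, which follows from the integral formula \eqref{E:1} applied to $K_{n-1}$ (with the base case handled by smoothness of $t \mapsto (t-1)^\lambda$ on $[1,u]$). The latter is immediate from \eqref{E:Kprop2}. The restriction $\lambda \ge 1$ in the hypothesis is used precisely so that $\lambda - 1 \ge 0 > -1$, keeping $K_n(u,\lambda-1)$ inside the range where the functions are defined and the induction closes; no other subtlety is expected.
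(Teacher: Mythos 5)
Your proposal is correct and follows essentially the same route as the paper: induction on $n$, computing the derivative of $K_n(u,\lambda)$ directly from \eqref{E:1} and matching it against $\lambda K_n(u,\lambda-1)$ rewritten via the inductive hypothesis and integration by parts, with the boundary term at $t=n$ killed by \eqref{E:Kprop2}. The only differences (multiplying through by $u$ and the index shift $n\mapsto n+1$) are cosmetic.
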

\begin{proof}
We assume that $u\ge n$, for the result is obvious otherwise. Our proof is by induction on $n$; the case $n=0$ is obvious. From \eqref{E:1}, we see that 
\begin{equation}\label{E:2}
\frac{d}{du}K_{n+1}(u,\lambda)=\frac{K_{n}(u-1,\lambda)}{u}+\lambda u^{\lambda-1}\int_{n+1}^{u}\frac{K_{n}(t-1,\lambda)}{t^{\lambda+1}}dt.
\end{equation}
On the other hand, we can use \eqref{E:1} together with the inductive hypothesis and integration by parts to get
\begin{align*}
\lambda K_{n+1}(u,\lambda-1)&=u^{\lambda-1}\int_{n+1}^{u}\frac{dK_{n}(t-1,\lambda)}{t^{\lambda}}\\
&=\frac{K_{n}(u-1,\lambda)}{u}+\lambda u^{\lambda-1}\int_{n+1}^{u}\frac{K_{n}(t-1,\lambda)}{t^{\lambda+1}}dt.
\end{align*}
The desired result follows by comparing this with \eqref{E:2}.
\end{proof}
As an application of this lemma, it is easy to deduce that
\begin{equation}\label{E:jderdecom}
j_{\kappa}'(u)=\frac{e^{-\kappa\gamma}}{\Gamma(\kappa)}\sum_{0\le n<u}(-\kappa)^{n}K_{n}(u,\kappa-1),
\end{equation}
and indeed expressions for higher derivatives of $j_{\kappa}$ can be obtained with more applications of Lemma \ref{E:DerivativeLemma}, if desired. 

\section{The Case $\lambda=0$}
  
Grupp and Richert \cite{GrRi} made a close study of $K_{n}(u,0)$, obtaining useful power series representations for these functions. Their notation differs from Wheeler's, but their results can be translated easily since
$$K_{n}(u,0)=(u+1)I_{n+1}(u+1).$$
In this section and the following one, we shall write $K_{n}(u)$ in place of $K_{n}(u,0)$. We can obtain an analytic continuation of the function $K_{n}(u)$ if we define $K_{n}(z)$ by the equations 
$$K_{0}(z)=1 \qquad \Re{z}>-1,$$
and
$$K_{n}(z)=\int_{n}^{z}K_{n-1}(z-1)\frac{dt}{t} \quad \Re{z}>n-1.$$
It is easy to see that $K_{n}(z)$ is an analytic function for $\Re{z}>n-1$ and coincides with $K_{n}(u)$ for real values of $u\ge n$. Thus, the power series
\begin{equation}\label{E:Series}
K_{n}(u)=\sum_{j=0}^{\infty}b_{j}(n,c)(u-(n+c))^{j}
\end{equation}
is valid for $\left|u-(n+c)\right|<1+c$ and $u\ge n$, and any $c\ge0$. Moreover, the constant coefficients satisfy
\begin{equation}\label{first coefficient}
b_{0}(n,c)=K_{n}(n+c).
\end{equation}
Now, we have the following recursive formula for the rest of the coefficients $b_{j}(n,c)$, where $j\neq0$.
\begin{lemma}\label{coefficient lemma}
If $j\ge1$, $n\ge1$, and $c\ge0$, then
$$b_{j}(n,c)=\frac{(-1)^{j-1}}{j(n+c)^{j}}\sum_{l=0}^{j-1}(-1)^{l}b_{l}(n-1,c)(n+c)^{l}.$$
\end{lemma}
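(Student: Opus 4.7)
The plan is to exploit the integral definition $K_n(z)=\int_n^z K_{n-1}(t-1)\,dt/t$. Since this gives $K_n'(u)=K_{n-1}(u-1)/u$, we can write
$$K_n(u) \;=\; K_n(n+c) \;+\; \int_{n+c}^{u} \frac{K_{n-1}(t-1)}{t}\,dt,$$
and \eqref{first coefficient} identifies $K_n(n+c)$ as $b_0(n,c)$. Thus the integral on the right contributes precisely the terms of the power series \eqref{E:Series} with $j\ge 1$, and the strategy reduces to expanding the integrand in powers of $t-(n+c)$, integrating termwise, and reading off the coefficient of $(u-(n+c))^j$.

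To execute this, I would expand both factors. The hypothesis applied to $K_{n-1}$ (centered at $(n-1)+c$) gives
$$K_{n-1}(t-1) \;=\; \sum_{l\ge 0} b_l(n-1,c)\,\bigl(t-(n+c)\bigr)^{l}$$
on the disk $|t-(n+c)|<1+c$, and the geometric series supplies
$$\frac{1}{t} \;=\; \sum_{m\ge 0}\frac{(-1)^m\bigl(t-(n+c)\bigr)^{m}}{(n+c)^{m+1}}$$
on $|t-(n+c)|<n+c$. Both series converge uniformly on compact subsets of the common disk, so their Cauchy product may be integrated termwise. Collecting powers of $y=t-(n+c)$ and integrating from $y=0$ to $y=u-(n+c)$, the coefficient of $(u-(n+c))^j$ for $j\ge 1$ works out to
$$b_j(n,c) \;=\; \frac{1}{j}\sum_{l=0}^{j-1}\frac{(-1)^{j-1-l}\,b_l(n-1,c)}{(n+c)^{\,j-l}},$$
and factoring $(-1)^{j-1}(n+c)^{-j}$ out of the sum (using $(-1)^{-l}=(-1)^{l}$) recovers the identity claimed by the lemma.

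There is no substantial obstacle: the argument is a bookkeeping exercise in the Cauchy product, and the only step requiring a word of justification is termwise integration, which is immediate from uniform convergence of the two series on compact sub-disks of their common domain. An alternative route is to match coefficients in the differential form $uK_n'(u)=K_{n-1}(u-1)$, which yields the two-term recurrence $(j+1)(n+c)\,b_{j+1}(n,c)=b_j(n-1,c)-j\,b_j(n,c)$; unwinding this recurrence into a closed expression is essentially equivalent to carrying out the integration above, so the direct integration argument seems cleaner.
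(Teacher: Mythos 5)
Your proposal is correct and amounts to the paper's own argument: the paper works with the differentiated identity $K_n'(u)=K_{n-1}(u-1)/u$, expands $1/u$ as the same geometric series about $n+c$, forms the Cauchy product with the series for $K_{n-1}(u-1)$, and equates coefficients of $(u-(n+c))^{j-1}$, which is exactly the bookkeeping you carry out after termwise integration. The integrate-then-match versus differentiate-then-match distinction is immaterial, so there is nothing to add.
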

\begin{proof}
From \eqref{E:1} and \eqref{E:Series}, we obtain $K_{n}'(u)=K_{n-1}(u-1)/u$ and 
$$\sum_{j=0}^{\infty}jb_{j}(n,c)(u-(n+c))^{j-1}=\frac{1}{u}\sum_{l=0}^{\infty}b_{l}(n-1,c)(u-(n+c))^{l}.$$
If $\left|u-(n+c)\right|<1+c$ and $n\ge1$, then $\left|u-(n+c)\right|<n+c$ and
$$\frac{1}{u}=\sum_{k=0}^{\infty}\frac{(u-(n+c))^{k}(-1)^{k}}{(n+c)^{k+1}}.$$
Inserting this last equation into the previous one, we find that
$$\sum_{j=0}^{\infty}jb_{j}(n,c)(u-(n+c))^{j-1}=\sum_{k=0}^{\infty}\sum_{l=0}^{\infty}\frac{(-1)^{k}}{(n+c)^{k+1}}b_{l}(n-1,c)(u-(n+c))^{k+l}.$$
The desired result follows by equating coefficients of $(u-(n+c))^{j-1}$ on both sides.
\end{proof}
An alternative form of the recursive formula for the sequence $b_{j}(n,c)$ will also be useful for induction arguments to follow.
\begin{lemma}\label{coefficient lemma2}
If $j\ge1$, $n\ge1$, and $c\ge0$, then
$$b_{j}(n,c)=\frac{1}{j(n+c)}\left\{b_{j-1}(n-1,c)-(j-1)b_{j-1}(n,c)\right\}.$$ 
	\end{lemma}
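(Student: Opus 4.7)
\medskip

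The plan is to derive Lemma \ref{coefficient lemma2} algebraically from Lemma \ref{coefficient lemma}, by isolating the top index of the finite sum and re-identifying the rest of the sum via the same lemma at index $j-1$. Write $N=n+c$ for brevity. Lemma \ref{coefficient lemma} says
$$b_{j}(n,c)=\frac{(-1)^{j-1}}{jN^{j}}\sum_{l=0}^{j-1}(-1)^{l}b_{l}(n-1,c)N^{l},$$
so multiplying by $jN$ gives
$$jN\,b_{j}(n,c)=\frac{(-1)^{j-1}}{N^{j-1}}\sum_{l=0}^{j-1}(-1)^{l}b_{l}(n-1,c)N^{l}.$$
First I would pull off the $l=j-1$ term, which contributes $b_{j-1}(n-1,c)$, leaving a tail sum over $0\le l\le j-2$.

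Next, assuming $j\ge 2$, I would apply Lemma \ref{coefficient lemma} with $j$ replaced by $j-1$ (which is legitimate since $j-1\ge 1$), namely
$$b_{j-1}(n,c)=\frac{(-1)^{j-2}}{(j-1)N^{j-1}}\sum_{l=0}^{j-2}(-1)^{l}b_{l}(n-1,c)N^{l},$$
to identify the tail sum with $(-1)^{j-2}(j-1)N^{j-1}b_{j-1}(n,c)$. Substituting back and tracking the factor $(-1)^{j-1}\cdot(-1)^{j-2}=(-1)^{2j-3}=-1$, the identity collapses to
$$jN\,b_{j}(n,c)=b_{j-1}(n-1,c)-(j-1)b_{j-1}(n,c),$$
which is the desired formula after dividing by $jN$.

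Finally, I would dispose of the base case $j=1$ separately, since the tail sum is empty and Lemma \ref{coefficient lemma} is not being invoked at index $0$. In that case Lemma \ref{coefficient lemma} directly gives $b_{1}(n,c)=N^{-1}b_{0}(n-1,c)$, and the claimed formula in Lemma \ref{coefficient lemma2} reads $b_{1}(n,c)=(N)^{-1}\{b_{0}(n-1,c)-0\cdot b_{0}(n,c)\}$, which agrees.

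I do not anticipate a real obstacle here; the argument is routine manipulation of a finite geometric-type sum. The only thing to watch is the sign bookkeeping in combining $(-1)^{j-1}$ from the $b_{j}(n,c)$ formula with $(-1)^{j-2}$ from the $b_{j-1}(n,c)$ formula, and remembering that the recursion is being applied at two different indices on the same sequence, not derived afresh from the generating series. Alternatively, one could reprove Lemma \ref{coefficient lemma2} from scratch by differentiating the power series \eqref{E:Series} and using $uK_{n}'(u)=K_{n-1}(u-1)$ (i.e.\ $K_{n}'(u)=K_{n-1}(u-1)/u$) with $u=(n+c)+(u-(n+c))$, but piggy-backing on Lemma \ref{coefficient lemma} keeps the argument short.
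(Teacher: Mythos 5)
Your derivation is correct: multiplying the Lemma \ref{coefficient lemma} identity by $j(n+c)$, splitting off the $l=j-1$ term (which contributes $b_{j-1}(n-1,c)$ since $(-1)^{2(j-1)}=1$), and recognizing the remaining sum over $0\le l\le j-2$ as $(-1)^{j-2}(j-1)(n+c)^{j-1}b_{j-1}(n,c)$ via Lemma \ref{coefficient lemma} at index $j-1$ does yield $j(n+c)\,b_j(n,c)=b_{j-1}(n-1,c)-(j-1)b_{j-1}(n,c)$, and your separate check of the base case $j=1$ is needed and handled properly. However, this is a genuinely different route from the paper's. The paper does not deduce Lemma \ref{coefficient lemma2} from Lemma \ref{coefficient lemma} at all: it starts from the observation \eqref{E:powercoeff} that $b_j(n,c)=K_n^{(j)}(n+c)/j!$, applies the Leibniz rule to the relation $K_{n-1}(u-1)=uK_n'(u)$ coming from \eqref{E:1} to get
\begin{equation*}
K_{n-1}^{(j-1)}(u-1)=uK_n^{(j)}(u)+(j-1)K_n^{(j-1)}(u),
\end{equation*}
and then divides by $(j-1)!$ and evaluates at $u=n+c$. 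That argument is self-contained (it never invokes Lemma \ref{coefficient lemma}), needs no sign bookkeeping, and treats $j=1$ uniformly; your alternative remark about differentiating the series with $u=(n+c)+(u-(n+c))$ is in the same spirit as the paper's proof. What your approach buys instead is the explicit algebraic equivalence of the two recursions: Lemma \ref{coefficient lemma2} is literally the ``telescoped'' form of Lemma \ref{coefficient lemma}, which is a tidy observation even though it makes your proof logically dependent on the earlier lemma where the paper's is not.
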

	\begin{proof}
First, observe that from \eqref{E:Series}, since $b_{j}(n,c)$ are precisely the coefficients in the power series expansion of $K_{n}(u)$ centered about $u=n+c$, we have 
\begin{equation}\label{E:powercoeff}
b_{j}(n,c)=\left.\frac{K_{n}^{(j)}(u)}{j!}\right|_{u=n+c}.
\end{equation}
Next, from \eqref{E:1}, we see that
$$K_{n-1}^{(j-1)}(u-1)=\left(uK_{n}'(u)\right)^{(j-1)}=\sum_{l=0}^{j-1}\binom{j-1}{l}u^{(l)}K_{n}^{(j-l)}(u)=uK_{n}^{(j)}(u)+(j-1)K_{n}^{(j-1)}(u).$$
Upon dividing both sides of this equation by $(j-1)!$ and evaluating at $u=n+c$, the formula follows from \eqref{E:powercoeff}. 
	\end{proof}

Grupp and Richert \cite{GrRi} gave the useful bound
\begin{equation}\label{GRbound}
\left|b_{j}(n,c)\right|\le\frac{1}{j(1+c)^{j}},
\end{equation}
valid for $0\le c\le5$ and $j\ge2$. We will need a bound in a larger range of $c$ for our purposes. Also, we will be content to accept a slightly worse bound in exchange for a simpler proof. Thus, we prove 
\begin{lemma}\label{L:newboundforb}
For $n\ge0$, $j\ge2$, and $0\le c\le19$,
\begin{equation}\label{E:newboundforb}
\left|b_{j}(n,c)\right|\le\frac{4}{(1+c)^{j}}.
\end{equation}
\end{lemma}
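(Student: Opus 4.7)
The plan is to proceed by induction on $j$, with the inductive step furnished by Lemma \ref{coefficient lemma2} and the base case reducing to a uniform estimate on $K_m(m+c)$.

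For the inductive step from $j-1$ to $j$, I would assume $|b_{j-1}(m,c)| \le 4/(1+c)^{j-1}$ holds uniformly in $m \ge 0$ and $0 \le c \le 19$. Noting that $b_{j-1}(0,c) = 0$ (since $K_0$ is constant, so all higher Taylor coefficients vanish), Lemma \ref{coefficient lemma2} and the triangle inequality yield, for $n \ge 1$,
\[
|b_j(n,c)| \le \frac{|b_{j-1}(n-1,c)| + (j-1)|b_{j-1}(n,c)|}{j(n+c)} \le \frac{4}{(n+c)(1+c)^{j-1}} \le \frac{4}{(1+c)^j},
\]
the final inequality using $n+c \ge 1+c$. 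For $n = 0$ the conclusion is trivial. Pushing the same recurrence down one more level, the lemma as a whole reduces to the base estimate $|b_1(m,c)|(1+c) \le 4$; since $b_1(m,c) = K_{m-1}(m-1+c)/(m+c)$ for $m \ge 1$ (and vanishes for $m=0$) and $(1+c)/(m+c) \le 1$, this in turn is implied by the single clean estimate
\[
K_m(m+c) \le 4 \qquad (m \ge 0,\ 0 \le c \le 19).
\]

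To establish this, I would dispose of small $m$ directly and large $m$ by iteration. The cases $m=0$ and $m=1$ are immediate ($\log 20 < 3$ handles the latter). For $m=2$, integrating $K_2(2+c) = \int_1^{1+c} \log s/(s+1)\,ds$ by parts gives
\[
K_2(2+c) = \log(1+c)\log(2+c) - \int_1^{1+c}\frac{\log(s+1)}{s}\,ds;
\]
splitting $\log(s+1) = \log s + \log(1+1/s)$ and using $\log(1+1/s) \ge 1/(s+1)$ produces a closed-form upper bound that is strictly below $4$ on $0 \le c \le 19$, with the worst value occurring near $c = 19$. For $m \ge 3$, the inductive bound $K_m(u) \le ((\log u)^m - (\log m)^m)/m!$, obtained from \eqref{E:1} via $\log(t-1) \le \log t$, delivers $K_m(m+19) < 4$ outright for $m \ge 4$. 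The transitional case $m=3$ is handled by a finer estimate on $K_3(3+c) = \int_1^{1+c} K_2(s)/(s+1)\,ds$ that retains the factor $1/(s+1)$ rather than replacing it by the weaker $1/s$.

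The main obstacle is the borderline case $m = 2$, where $K_2(2+c)$ climbs to a value only marginally below $4$ as $c \to 19$. The cruder Grupp--Richert bound valid in the restricted range $c \le 5$ (with constant $1$) is no longer sharp enough here, so careful bookkeeping of the integration-by-parts remainder is required to close the argument with the stated constant $4$ intact.
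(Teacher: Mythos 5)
Your proposal is correct in outline, and its inductive engine coincides with the paper's: the passage from $j-1$ to $j$ via Lemma \ref{coefficient lemma2}, with the factor $j$ cancelling and $n+c\ge 1+c$ supplying the extra power of $1+c$, is in substance exactly the paper's inductive step (the paper, like you, treats $n=0$ trivially). Where you genuinely differ is the base case. The paper runs a double induction: it writes $b_j(0,c)$, $b_j(1,c)$, $b_j(2,c)$ in closed form, quoting an inequality of Grupp--Richert to dispose of $n=2$, and then settles $j=2$ for $n\ge3$ by feeding the crude bound \eqref{E:easyKinequality}, $K_n(u)\le\log^n(u-n+1)/n!$, into Lemma \ref{coefficient lemma2}, using that $|b_2(n,c)|$ is a difference of two nonnegative quantities so that $\max_k 3^k/k!=4.5$ comfortably yields $4/(1+c)^2$. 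You instead apply the recurrence once more and reduce the entire lemma to the single estimate $K_m(m+c)\le4$ for $0\le c\le19$, which eliminates both the explicit $n=2$ formula and the appeal to the external Grupp--Richert inequality; the price is that at $m=2,3$ the crude bound only gives $\log^m(20)/m!\approx4.5$, so the sharper integral estimates you sketch are genuinely needed. They do go through, but mind the margins and the details: your closed-form bound for $K_2(2+c)$ equals about $3.99$ at $c=19$ (and is increasing on $[0,19]$), so it clears $4$ only barely; for $m=3$ the correct identity is $K_3(3+c)=\int_2^{2+c}K_2(s)\,ds/(s+1)$, not $\int_1^{1+c}$, and keeping the factor $1/(s+1)$ does bring the estimate down to roughly $3.6$, but that computation is only asserted, not carried out; and for $m\ge4$ the paper's own inequality \eqref{E:easyKinequality} is simpler and stronger than your variant (it gives $\log^4(20)/24<3.4$ and is decreasing in $m$ there), saving you the monotonicity-in-$m$ check your bound requires. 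In short: same inductive step, a tidier base-case reduction on your side, bought at the cost of two borderline numerical estimates that the paper's case analysis sidesteps.
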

\begin{proof}
The proof will proceed by induction on both $j$ and $n$. First, calculations show that
\begin{align*}
b_j(0,c)= & 
  \begin{cases} 
       1 & \text{ if $j=0$,}\\
       0 & \text{ if $j\ge 1$.}
 \end{cases}
\end{align*}
due to the simple form of $K_{0}(u)$. Using this calculation together with the recursive nature of the coefficients, we also calculate that
\begin{align*}
b_j(1,c)= &
  \begin{cases}
    \log(c+1) & \text{ if $j=0$,}\\
   \displaystyle
           \frac{(-1)^{j-1}}{j(c+1)^j} & \text{ if $j\ge 1$,}
  \end{cases}
\end{align*}
and,
\begin{align*}
b_j(2,c) = & 
   \begin{cases}
    K_2(2+c) & \text{ if $j=0$,}\\
    \displaystyle \frac{\log(c+1)}{c+2} & \text{ if $j=1$,}\\
    \displaystyle \frac{(-1)^{j-1}}{j(c+2)^j} 
           \left\{ \log(c+1) - \sum_{l=1}^{j-1} \frac{1}{l}\left(\frac{c+2}{c+1}\right)^{l} \right\}
               & \text{ if $j\ge 2$}.
      \end{cases}    
\end{align*}
The bound claimed in the lemma is therefore clear for $n=0$ and $n=1$. For the case when $n=2$, we will need to show that
$$\left|\log(c+1)-\sum_{l=1}^{j-1} \frac{1}{l}\left(\frac{c+2}{c+1}\right)^{l}\right|\le j\left(\frac{c+2}{c+1}\right)^{j}.$$ 
For one side of the inequality, we have that for $0\le c\le19$,
$$\log(c+1)-\sum_{l=1}^{j-1} \frac{1}{l}\left(\frac{c+2}{c+1}\right)^{l}\le \log(c+1)-\frac{c+2}{c+1}<2\le j\left(\frac{c+2}{c+1}\right)^j.$$
For the other side of the inequality, we must show that
$$\sum_{l=1}^{j-1} \frac{1}{l}\left(\frac{c+2}{c+1}\right)^{l}-\log(c+1)\le j\left(\frac{c+2}{c+1}\right)^{j},$$
but here, Grupp and Richert \cite[first formula below (4.6)]{GrRi} obtain the superior bound
$$\sum_{l=1}^{j-1} \frac{1}{l}\left(\frac{c+2}{c+1}\right)^{l}-\log(c+1)\le \left(\frac{c+2}{c+1}\right)^{j}.$$
Let us therefore assume that $n\ge3$ from now on. Before we can induct on both $j$ and $n$, we need to prove that the bound in \eqref{E:newboundforb} holds for $j=2$. Here, Grupp and Richert \cite[formula (2.9)]{GrRi} supply us with the useful inequality
\begin{equation}\label{E:easyKinequality}
0\le K_{n}(u)\le\frac{\log^{n}(u-n+1)}{n!}.
\end{equation}
This bound clearly holds for $n=0$. By induction, when $n\ge1$, we have
\begin{align*}
K_{n}(u)&\le\frac{1}{(n-1)!}\int_{n}^{u}\log^{n-1}(t-n+1)\frac{dt}{t}\\
&\le\frac{1}{(n-1)!}\int_{n}^{u}\log^{n-1}(t-n+1)d\log(t-n+1)=\frac{\log^{n}(u-n+1)}{n!},
\end{align*}
since $t-n+1\le t$ for $n\ge 1$. Thus, since $\log(c+1)<3$ for $0\le c\le19$, it follows from Lemma \ref{coefficient lemma2}, \eqref{E:powercoeff}, and \eqref{E:easyKinequality} that
\begin{align*}
\left|b_{2}(n,c)\right|&=\frac{1}{2(n+c)}\left|\frac{K_{n-2}(n-2+c)}{(n-1+c)}-\frac{K_{n-1}(n-1+c)}{(n+c)}\right|\\
&\le\frac{1}{2(n+c)}\max\left\{\frac{K_{n-2}(n-2+c)}{(n-1+c)},\frac{K_{n-1}(n-1+c)}{(n+c)}\right\}\\
&\le\frac{1}{2(n+c)}\max\left\{\frac{\log^{n-2}(c+1)}{(n-2)!(n-1+c)},\frac{\log^{n-1}(c+1)}{(n-1)!(n+c)}\right\}\\
&\le\frac{1}{2(1+c)^{2}}\max\left\{\frac{\log^{n-2}(c+1)}{(n-2)!},\frac{\log^{n-1}(c+1)}{(n-1)!}\right\}\\
&\le\frac{1}{2(1+c)^{2}}\max\left\{\frac{3^{n-2}}{(n-2)!},\frac{3^{n-1}}{(n-1)!}\right\}\le\frac{4}{(1+c)^2}.
\end{align*}
To complete the induction, we observe that if $j\ge3$ and $n\ge3$,
\begin{align*}
\left|b_{j}(n,c)\right|&=\left|\frac{1}{j(n+c)}\left\{b_{j-1}(n-1,c)-(j-1)b_{j-1}(n,c)\right\}\right|\\
&\le\frac{1}{j(n+c)}\left(\frac{4}{(1+c)^{j-1}}+(j-1)\frac{4}{(1+c)^{j-1}}\right)\\
&\le\frac{4}{(1+c)^{j}}.
\end{align*}
\end{proof}
If one requires a bound for $b_{j}(n,c)$ in a larger range of $c$ values, say $2\le c\le C$, one could probably replace the constant $4$ in the lemma above with $\log(C+1)$. A bound for $c\le19$ is more than enough for our purposes. Grupp and Richert \cite{GrRi} remarked that the bound in \eqref{GRbound} could be extended to hold for $0\le c\le9$, but with considerably more work.

\section{The Chain of Circles}

In the last section, many facts concerning the power series representations of $K_{n}(u)$ were assembled. This information will be especially useful when combined with an idea of Grupp and Richert, known as the Chain of Circles, or \textit{Kreiskettenverfahren}. The method is essentially analytic continuation. To begin, one defines the sequence
$$c_{\nu}=\left(\frac{3}{2}\right)^{\nu}-1,$$
and forms the corresponding sequence of power series
$$K_{n}(u;\nu)=\sum_{j=0}^{\infty}b_{j}(n,c_{\nu})\left(u-(n+c_{\nu})\right)^{j}.$$
This sequence of power series has the feature that it can be generated recursively. The power series for $K_{n}(u;\nu)$ is obtained from $K_{n}(u;\nu-1)$ since, using \eqref{first coefficient},
\begin{equation}\label{chain l=0}
b_{0}(n,c_{\nu})=K_{n}(n+c_{\nu};\nu-1),
\end{equation}
and the rest of the coefficients can be computed using Lemma \ref{coefficient lemma} or Lemma \ref{coefficient lemma2}. Thus, we have a chain of power series representations for $K_{n}(z)$ that can be continued throughout the half plane $\Re{z}>n-1$, as seen in Figure below.

\begin{center}
	\begin{tabular}{c}
		\includegraphics[scale=0.7]{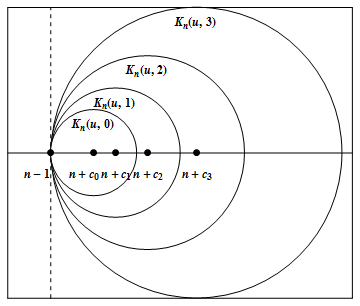}\label{fig1ch2}\\
		Figure 1. Chain of Circles
	\end{tabular}
\end{center}

Although the power series $K_{n}(u;\nu)$ is valid inside a larger interval, we will restrict the representation to the interval 
$$I_{\nu}=\left\{u:n+c_{\nu}<u\le n+c_{\nu+1}\right\}$$
to speed the convergence of the series. The sequence $c_{\nu}$, as Grupp and Richert point out, strikes a balance between the number of power series needed to cover a fixed $u$ value, and the convergence rate of each of those power series. Finally, we have obtained a useful decomposition of $K_{n}(u)$, given by 
\begin{equation}\label{chaindecomposition l=0}
K_{n}(u)=\sum_{\nu=0}^{\infty}\chi_{\nu}(u)K_{n}(u;\nu),
\end{equation}
where $\chi_{\nu}(u)$ is the characteristic function of the interval $I_{\nu}$. Now, for numerical purposes, we will truncate each of these power series to, say, $N$. Actually, for our purposes we will eventually take $N=80$. In the first circle, $K_{n}(u;0)$ will suffer only from the truncation. However, in the next circle, $K_{n}(u;1)$ will not only be truncated, but the coefficients will be approximates of the actual coefficients due to the recursive nature of $b_{0}(n,c_{1})=K_{n}(n+c_{1};0)$. Controlling the error that propagates will therefore require some work. To make our discussion more precise, let us define
\begin{equation}\label{tildaKdef l=0}
\widetilde{K}_{n}(u;\nu)=\sum_{j=0}^{N}\widetilde{b}_{j}(n,c_{\nu})\left(u-(n+c_{\nu})\right)^{j}.
\end{equation}
The coefficients $\widetilde{b}_{j}(n,c_{\nu})$ will be generated in exactly the same fashion as $b_{j}(n,c_{\nu})$ using \eqref{chain l=0} and Lemma \ref{coefficient lemma}. When $\nu=0$ we have $\widetilde{b}_{j}(n,c_{0})=b_{j}(n,c_{0})$, for $j\le N$. However, the $\widetilde{b}_{j}(n,c_{\nu})$ will be approximates of the actual coefficients $b_{j}(n,c_{\nu})$ for $\nu\ge1$ due to \eqref{chain l=0}. More specifically, we define
$$
\widetilde{b}_{j}(0,c_{0})=
\begin{cases}
	   \phantom{+} 1 & \text{if $j=0$,}\\
	   \phantom{+} 0 & \text{if $j>0$},
	 \end{cases}$$
and
$$
\widetilde{b}_{j}(n,c_{\nu})=
\begin{cases}
	   \phantom{+} \widetilde{K}_{n}(n+c_{\nu}) & \text{if $j=0$,}\\
		 \phantom{+} \displaystyle\frac{(-1)^{j-1}}{j(n+c_{\nu})^{j}}\sum_{l=0}^{j-1}\widetilde{b}_{l}(n-1,c_{\nu})(n+c_{\nu})^{l} & \text{if $0<j\le N$}\\
	   \phantom{+} 0 & \text{if $j>N$}.
	 \end{cases}$$
The following lemma of Grupp and Richert tells us that the error between the coefficients $b_{j}(n,c_{\nu})$ and $\widetilde{b}_{j}(n,c_{\nu})$, for $1\le j\le N$, can be obtained from the corresponding error when $j=0$.  
\begin{lemma}\label{gr prelim lemma}
If, for a fixed $c\ge0$, we have
$$\left|b_{0}(n,c)-\widetilde{b}_{0}(n,c)\right|\le\delta,$$
then, for $0\le j\le N$,
$$\left|b_{j}(n,c)-\widetilde{b}_{j}(n,c)\right|\le\frac{\delta}{(2+\frac{c}{2})^{j}}.$$
\end{lemma}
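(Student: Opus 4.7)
The plan is to proceed by induction on $j$, exploiting the fact that both $b_j(n,c)$ and $\widetilde{b}_j(n,c)$ (for $1 \le j \le N$) are generated by the same recursion of Lemma \ref{coefficient lemma}, differing only in their inputs $b_l(n-1,c)$ versus $\widetilde{b}_l(n-1,c)$. Subtracting the two recursions yields the error-propagation identity
$$b_j(n,c) - \widetilde{b}_j(n,c) = \frac{(-1)^{j-1}}{j(n+c)^j}\sum_{l=0}^{j-1} (-1)^l \bigl(b_l(n-1,c) - \widetilde{b}_l(n-1,c)\bigr)(n+c)^l.$$
The hypothesis is naturally understood to hold uniformly in $n$ along the chain, so that the inductive bound at the previous $n$ can be fed in.

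Applying the triangle inequality together with the inductive bound $|b_l(n-1,c) - \widetilde{b}_l(n-1,c)| \le \delta/(2+c/2)^l$ for $l < j$, I would obtain
$$\bigl|b_j(n,c) - \widetilde{b}_j(n,c)\bigr| \le \frac{\delta}{j(n+c)^j}\sum_{l=0}^{j-1} \left(\frac{n+c}{2+c/2}\right)^l.$$
Writing $r := (n+c)/(2+c/2)$, observe that for $n \ge 2$ and $c \ge 0$ one has $r \ge 1$, so $\sum_{l=0}^{j-1} r^l \le j\,r^{j-1}$. Substituting and simplifying gives the estimate $\delta/\bigl((n+c)(2+c/2)^{j-1}\bigr)$, which is at most $\delta/(2+c/2)^j$ precisely because $n+c \ge 2+c/2$. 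This completes the inductive step for $n \ge 2$.

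The edge cases $n=0$ and $n=1$ must be treated separately because the inequality $r \ge 1$ fails there. Luckily, both yield an exact match. For $n=0$ the identity $K_0 \equiv 1$ forces $\widetilde{b}_j(0,c) = b_j(0,c) = \delta_{j,0}$. For $n=1$, the only input consulted by the recursion at level $n=1$ is $\widetilde{b}_0(0,c) = 1$, so the recursion reproduces $\widetilde{b}_j(1,c) = (-1)^{j-1}/(j(1+c)^j) = b_j(1,c)$ for every $j \ge 1$. Hence $b_j(n,c) - \widetilde{b}_j(n,c) = 0$ at $n=0,1$ for all $j \ge 1$, and the claimed bound holds trivially.

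The only delicate point in the argument is isolating the condition $r \ge 1$; once that is in place, the bulk of the work is a routine geometric-series estimate. The low-$n$ cases present no real difficulty but must be flagged, since they are exactly where the geometric-series reduction would otherwise break down.
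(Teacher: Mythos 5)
Your argument is correct and is essentially the paper's own proof: the same recursion-difference identity from Lemma~\ref{coefficient lemma}, the triangle inequality, and the geometric-sum bound requiring $n+c\ge 2+\frac{c}{2}$, with $n=0,1$ handled as exact base cases and the hypothesis understood uniformly in $n$. The only cosmetic quibble is that you call it induction on $j$, whereas the structure you actually use (feeding in the full bound at level $n-1$) is the paper's induction on $n$.
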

\begin{proof}
This is proved by induction on $n$. It is vacuously true for $n=0$ and $n=1$ since in those cases we will take $\widetilde{b}_{j}(n,c)=b_{j}(n,c)$. Now, by induction,
\begin{align*}
\left|b_{j}(n,c)-\widetilde{b}_{j}(n,c)\right|&\le\frac{1}{j(n+c)^{j}}\sum_{l=0}^{j-1}\left|b_{l}(n-1,c)-\widetilde{b}_{l}(n-1,c)\right|(n+c)^{l}\\
&\le\frac{1}{j(n+c)^{j}}\sum_{l=0}^{j-1}\frac{\delta}{(2+\frac{c}{2})^{l}}(n+c)^{l}\\
&=\frac{\delta}{(2+\frac{c}{2})^{j}}\left(\frac{1}{j}\sum_{l=0}^{j-1}\left(\frac{2+\frac{c}{2}}{n+c}\right)^{j-l}\right)\\
&\le\frac{\delta}{(2+\frac{c}{2})^{j}},
\end{align*}
since the terms in this last sum are all bounded above by one.
\end{proof}
Following Grupp and Richert, we prove
\begin{lemma}\label{prelim lemma}
If $0\le j\le N$, $\nu\ge1$, and $c_{\nu}\le19$, then
\begin{equation}\label{errorbj l=0}
\left|b_{j}(n,c_{\nu})-\widetilde{b}_{j}(n,c_{\nu})\right|\le\frac{1}{\left(2+\frac{c_{\nu}}{2}\right)^{j}}\frac{M_{\nu-1}}{2^N},
\end{equation}
where
\begin{equation}\label{initialM l=0}
M_{\nu}=4\prod_{l=0}^{\nu}\left(\frac{7+c_{l}}{3}\right)=4\prod_{l=0}^{\nu}\left(2+\frac{1}{3}\left(\frac{3}{2}\right)^{l}\right).
\end{equation} 
\end{lemma}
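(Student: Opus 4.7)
The plan is to reduce the statement to the single case $j = 0$ via Lemma \ref{gr prelim lemma}, and then establish the $j = 0$ bound by induction on $\nu$. If one can show
$$\left|b_{0}(n, c_{\nu}) - \widetilde{b}_{0}(n, c_{\nu})\right| \le \frac{M_{\nu-1}}{2^{N}}$$
uniformly in $n$, then Lemma \ref{gr prelim lemma}, applied with $\delta = M_{\nu-1}/2^{N}$ and $c = c_{\nu}$, immediately upgrades the bound to the full range $0 \le j \le N$ with the claimed factor $(2 + c_{\nu}/2)^{-j}$.

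For the $j = 0$ bound, observe that $b_{0}(n, c_{\nu}) = K_{n}(n+c_{\nu})$ equals the true power series at circle $\nu-1$ evaluated at $u = n + c_{\nu}$, while $\widetilde{b}_{0}(n, c_{\nu}) = \widetilde{K}_{n}(n + c_{\nu}; \nu-1)$ is its truncated approximate analogue. Their difference decomposes naturally into a propagated-coefficient piece
$$\sum_{j=0}^{N} \bigl(b_{j}(n, c_{\nu-1}) - \widetilde{b}_{j}(n, c_{\nu-1})\bigr)(c_{\nu} - c_{\nu-1})^{j},$$
together with a fresh truncation piece $\sum_{j \ge N+1} b_{j}(n, c_{\nu-1})(c_{\nu} - c_{\nu-1})^{j}$. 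Two exact ratios drive the estimate: $(c_{\nu} - c_{\nu-1})/(1 + c_{\nu-1}) = 1/2$ converts the tail, via Lemma \ref{L:newboundforb} (which is where the hypothesis $c_{\nu} \le 19$ enters), into a contribution of at most $4/2^{N}$; while $(c_{\nu} - c_{\nu-1})/(2 + c_{\nu-1}/2) < 1$ converts the inductive hypothesis at level $\nu-1$, with geometric sum bounded by $(4 + c_{\nu-1})/3$, into a contribution of at most $M_{\nu-2}(4 + c_{\nu-1})/(3 \cdot 2^{N})$.

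Combining, the target bound $M_{\nu-1}/2^{N}$ follows once
$$\frac{M_{\nu-2}(4 + c_{\nu-1})}{3} + 4 \le \frac{M_{\nu-2}(7 + c_{\nu-1})}{3} = M_{\nu-1},$$
which collapses to the single condition $M_{\nu-2} \ge 4$. Since $M_{0} = 28/3 > 4$ and $M_{\nu}$ is monotonically increasing, this holds for every $\nu \ge 2$. The base case $\nu = 1$ is handled directly: by definition $\widetilde{b}_{j}(n, c_{0}) = b_{j}(n, c_{0})$ for $j \le N$, so the propagated-coefficient sum is identically zero, and the truncation contribution alone yields $4/2^{N} \le M_{0}/2^{N}$.

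The main obstacle is really just bookkeeping: one must verify that the factor $(7 + c_{\ell})/3$ in the definition of $M_{\nu}$, rather than the $(4 + c_{\ell})/3$ coming out of the geometric sum, furnishes exactly the extra $M_{\nu-2}/2^{N}$ of slack needed at each stage to absorb the fresh $4/2^{N}$ of truncation error introduced when passing from circle $\nu-1$ to circle $\nu$.
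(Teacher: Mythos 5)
Your argument is correct and follows essentially the same route as the paper: induction on $\nu$ for the $j=0$ case, splitting the error at circle $\nu$ into the propagated coefficient errors (controlled by the geometric sum with ratio $(c_{\nu}-c_{\nu-1})/(2+\tfrac{c_{\nu-1}}{2})$, summing to $(4+c_{\nu-1})/3$) plus the fresh truncation tail (controlled by Lemma \ref{L:newboundforb} and the ratio $1/2$), then upgrading to all $0\le j\le N$ via Lemma \ref{gr prelim lemma}. Your explicit observation that the factor $(7+c_{l})/3$ versus $(4+c_{l})/3$ provides exactly the slack $M_{\nu-2}\ge 4$ is just a transparent restatement of the paper's step of absorbing the tail term into the $\bigl(1+\sum_{j\ge0}(\cdot)^{j}\bigr)$ factor.
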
 
\begin{proof}
We are going to use Lemma \ref{gr prelim lemma} to establish that for $\nu\ge1$,
\begin{equation}\label{errorb0 l=0}
\left|b_{0}(n,c_{\nu})-\widetilde{b}_{0}(n,c_{\nu})\right|\le\frac{M_{\nu-1}}{2^N}.
\end{equation}
The proof will proceed by induction on $\nu$. For $\nu=0$, we will take $\widetilde{b}_{j}(n,c_{0})=b_{j}(n,c_{0})$. Therefore, when $\nu=1$, we use Lemma \ref{L:newboundforb} and \eqref{chain l=0} to observe that
$$\left|b_{0}(n,c_{1})-\widetilde{b}_{0}(n,c_{1})\right|\le\sum_{j>N}\left|b_{j}(n,c_{0})\right|(c_{1}-c_{0})^{j}\le\frac{4}{2^{N}}\le\frac{4}{2^{N}}\left(\frac{7+c_{0}}{3}\right).$$
Hence, by induction, the difference
$$\left|b_{0}(n,c_{\nu})-\widetilde{b}_{0}(n,c_{\nu})\right|=\left|K_{n}(n+c_{\nu};\nu-1)-\widetilde{K}_{n}(n+c_{\nu};\nu-1)\right|$$
is at most
$$\sum_{j\le N}\left|b_{j}(n,c_{\nu-1})-\widetilde{b}_{j}(n,c_{\nu-1})\right|(c_{\nu}-c_{\nu-1})^{j}+\sum_{j>N}\left|b_{j}(n,c_{\nu-1})\right|(c_{\nu}-c_{\nu-1})^{j}.$$
Using the inductive hypothesis together with Lemma \ref{L:newboundforb}, this series is bounded by
$$\left(1+\sum_{j=0}^{\infty}\left(\frac{c_{\nu}-c_{\nu-1}}{2+\frac{c_{\nu-1}}{2}}\right)^j\right)\frac{4}{2^N}\prod_{l=0}^{\nu-2}\left(\frac{7+c_{l}}{3}\right)=\frac{4}{2^N}\prod_{l=0}^{\nu-1}\left(\frac{7+c_{l}}{3}\right).$$ 
The induction is complete, and from Lemma \ref{gr prelim lemma},
$$\left|b_{j}(n,c_{\nu})-\widetilde{b}_{j}(n,c_{\nu})\right|\le\frac{1}{\left(2+\frac{c_{\nu}}{2}\right)^{j}}\frac{4}{2^N}\prod_{l=0}^{\nu-1}\left(2+\frac{1}{3}\left(\frac{3}{2}\right)^{l}\right).$$
\end{proof}
Now that we have good control of the coefficients $\widetilde{b}_{j}(n,c_{\nu})$, we prove the following bound concerning the error between $K_{n}(u;\nu)$ and $\widetilde{K}_{n}(u;\nu)$. 
\begin{lemma}\label{L:chain error1 l=0}
If $n\ge0$, $\nu\ge0$, $c_{\nu}\le19$, and $N\ge2$, then
\begin{equation}\label{newboundforerror l=0}
\left|K_{n}(u;\nu)-\widetilde{K}_{n}(u;\nu)\right|\le\frac{ M_{\nu}}{2^{N}},
\end{equation}
where $M_{\nu}$ is as in \eqref{initialM l=0}.
\end{lemma}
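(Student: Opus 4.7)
The plan is to split the difference into a coefficient-error piece and a tail piece:
$$K_n(u;\nu)-\widetilde{K}_n(u;\nu) = \sum_{j=0}^{N}\bigl(b_j(n,c_\nu)-\widetilde{b}_j(n,c_\nu)\bigr)\bigl(u-(n+c_\nu)\bigr)^{j} + \sum_{j>N}b_j(n,c_\nu)\bigl(u-(n+c_\nu)\bigr)^{j}.$$
Since $u\in I_\nu$, throughout we have $|u-(n+c_\nu)|\le c_{\nu+1}-c_\nu=\tfrac{1}{2}(3/2)^{\nu}$. The base case $\nu=0$ is essentially automatic, since by construction $\widetilde{b}_j(n,c_0)=b_j(n,c_0)$ for $j\le N$, so only the tail survives and is bounded as below, giving $4/2^N\le M_0/2^N$.

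For $\nu\ge 1$, I bound the coefficient-error piece using Lemma \ref{prelim lemma}:
$$\sum_{j=0}^{N}\bigl|b_j(n,c_\nu)-\widetilde{b}_j(n,c_\nu)\bigr|(c_{\nu+1}-c_\nu)^{j}\le\frac{M_{\nu-1}}{2^N}\sum_{j=0}^{N}\left(\frac{(3/2)^\nu}{3+(3/2)^\nu}\right)^{\!j}\le\frac{M_{\nu-1}}{2^N}\!\left(1+\tfrac{(3/2)^\nu}{3}\right),$$
using the identity $\frac{c_{\nu+1}-c_\nu}{2+c_\nu/2}=\frac{(3/2)^\nu}{3+(3/2)^\nu}<1$ and summing the geometric series to infinity. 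For the tail piece, Lemma \ref{L:newboundforb} applies (because $j>N\ge 2$ and $c_\nu\le 19$), and since $1+c_\nu=(3/2)^\nu$,
$$\sum_{j>N}|b_j(n,c_\nu)|(c_{\nu+1}-c_\nu)^{j}\le 4\sum_{j>N}\left(\frac{(3/2)^\nu/2}{1+c_\nu}\right)^{\!j}=4\sum_{j>N}(1/2)^{j}=\frac{4}{2^N}.$$
The tail ratio being exactly $1/2$, independently of $\nu$, is the principal reason for the geometric choice $c_\nu=(3/2)^\nu-1$.

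Combining the two estimates yields
$$\bigl|K_n(u;\nu)-\widetilde{K}_n(u;\nu)\bigr|\le\frac{1}{2^N}\!\left\{M_{\nu-1}\!\left(1+\tfrac{(3/2)^\nu}{3}\right)+4\right\}.$$
To finish, I note that $M_{\nu-1}\ge M_0=28/3>4$ for every $\nu\ge 1$, so the trailing $4$ can be absorbed into one additional copy of $M_{\nu-1}$:
$$M_{\nu-1}\!\left(1+\tfrac{(3/2)^\nu}{3}\right)+4\le M_{\nu-1}\!\left(2+\tfrac{(3/2)^\nu}{3}\right)=M_{\nu-1}\cdot\frac{7+c_\nu}{3}=M_\nu,$$
delivering the stated bound $M_\nu/2^N$. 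The only substantive step is arranging the two geometric sums so that both converge and the residual $4$ is absorbed; once the choice $c_\nu=(3/2)^\nu-1$ is fixed, the ratio $1/2$ in the tail and the inequality $4\le M_{\nu-1}$ drop out of the algebra, so no real obstacle remains.
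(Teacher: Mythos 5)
Your proof is correct and follows essentially the same route as the paper: split into the coefficient-error part for $j\le N$ (bounded via Lemma \ref{prelim lemma}) and the tail for $j>N$ (bounded via Lemma \ref{L:newboundforb}, where the ratio $(c_{\nu+1}-c_\nu)/(1+c_\nu)=1/2$), then sum the geometric series and absorb the residual $4/2^N$ into one copy of $M_{\nu-1}/2^N$ to recover $M_\nu=M_{\nu-1}(7+c_\nu)/3$. The only cosmetic difference is that you make the absorption step $4\le M_{\nu-1}$ explicit, which the paper leaves implicit.
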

\begin{proof}
The proof will proceed by induction on $\nu$. When $\nu=0$, we will take $\widetilde{b}_{j}(n,c_{0})=b_{j}(n,c_{0})$, so 
$$K_{n}(u;0)-\widetilde{K}_{n}(u;0)=\sum_{j>N}b_{j}(n,c_{0})(u-(n+c_{0}))^{j}.$$
Thus, using \eqref{E:newboundforb},
$$\left|K_{n}(u;0)-\widetilde{K}_{n}(u;0)\right|\le\sum_{j>N}\left|b_{j}(n,c_{0})\right|(c_{1}-c_{0})^{j}\le\frac{4}{2^{N}}\le\frac{M_{0}}{2^{N}}.$$
For $\nu\ge1$, we use \eqref{E:newboundforb} and Lemma \ref{prelim lemma} since $\left|K_{n}(u;\nu)-\widetilde{K}_{n}(u;\nu)\right|$ is at most 
$$\sum_{j\le N}\left|b_{j}(n,c_{\nu})-\widetilde{b}_{j}(n,c_{\nu})\right|(c_{\nu+1}-c_{\nu})^{j}+\sum_{j>N}\left|b_{j}(n,c_{\nu})\right|(c_{\nu+1}-c_{\nu})^{j},$$
which is bounded by 
$$\frac{M_{\nu-1}}{2^N}\sum_{j\le N}\left(\frac{c_{\nu+1}-c_{\nu}}{2+\frac{c_{\nu}}{2}}\right)^{j}+\frac{4}{2^N}\le\left(1+\sum_{j=0}^{\infty}\left(\frac{c_{\nu+1}-c_{\nu}}{2+\frac{c_{\nu}}{2}}\right)^{j}\right)\frac{M_{\nu-1}}{2^N}=\frac{M_{\nu}}{2^{N}}.$$
\end{proof}

\section{Generalizing to Integral $\lambda\neq0$}

When considering integral $\lambda\neq0$, one is faced with the problem of understanding the coefficients of the power series representation 
\begin{equation}\label{genseriesrep}
K_{n}(u,\lambda)=\sum_{j=0}^{\infty}b_{j}(n,c,\lambda)(u-(n+c))^{j},
\end{equation}
again valid inside $\left|u-(n+c)\right|<1+c$, by the same reasoning as in \eqref{E:Series}. The critical observation here is that repeated applications of Lemma \ref{E:DerivativeLemma} can be used to write the $b_{j}(n,c,\lambda)$ in terms of $b_{j}(n,c,0)=b_{j}(n,c)$. Thus, to generate these coefficients, one can use the fact that 
\begin{equation}\label{gen first coefficient}
b_{0}(n,c,\lambda)=K_{n}(n+c,\lambda),
\end{equation}
and, for $j\neq0$,
\begin{equation}\label{gen coefficient lemma}
b_{j}(n,c,\lambda)=\frac{\lambda}{j}b_{j-1}(n,c,\lambda-1).
\end{equation}
The analytic continuation technique of Grupp and Richert will be carried out similar to the case when $\lambda=0$. As before, these power series will be chained together to generate expansions throughout the interval $u\ge n$. Thus, one defines
$$K_{n}(u,\lambda;\nu)=\sum_{j=0}^{\infty}b_{j}(n,c_{\nu},\lambda)\left(u-(n+c_{\nu})\right)^{j},$$
each one valid inside the interval $I_{\nu}=\left\{u:n+c_{\nu}<u\le n+c_{\nu+1}\right\}$. This sequence of power series can be generated recursively. The power series for $K_{n}(u,\lambda;\nu)$ is obtained from $K_{n}(u,\lambda;\nu-1)$ since 
\begin{equation}\label{chain}
b_{0}(n,c_{\nu},\lambda)=K_{n}(n+c_{\nu},\lambda;\nu-1).
\end{equation}
This is precisely how the power series expansions are \textit{chained} together. The problem, of course, is that we will have to settle for an approximation to $K_{n}(n+c_{\nu},\lambda;\nu-1)$, as this value will be obtained by a truncated power series expansion. The series are related to the $K_{n}(u,\lambda)$ functions via the decomposition,
\begin{equation}\label{chaindecomposition}
K_{n}(u,\lambda)=\sum_{\nu=0}^{\infty}\chi_{\nu}(u)K_{n}(u,\lambda;\nu),
\end{equation}
where $\chi_{\nu}(u)$ is the characteristic function of the interval $I_{\nu}$. Of course, we make the definition $K_{n}(u,0;\nu)=K_{n}(u;\nu)$. We produce power series that represent $K_{n}(u,\lambda)$ in various intervals. We will truncate these series for numerical purposes, and hence define
\begin{equation}\label{tildaKdef}
\widetilde{K}_{n}(u,\lambda;\nu)=\sum_{j=0}^{N}\widetilde{b}_{j}(n,c_{\nu},\lambda)\left(u-(n+c_{\nu})\right)^{j}.
\end{equation}
The coefficients $\widetilde{b}_{j}(n,c_{\nu},\lambda)$ are defined by
$$\widetilde{b}_{j}(0,c_{0},\lambda)=
\begin{cases}
	   \phantom{+} 0 & \text{if $0\le j<\lambda$,}\\
	   \phantom{+} 1 & \text{if $j=\lambda$,}\\
	   \phantom{+} 0 & \text{if $j>\lambda$},
	 \end{cases}$$
and
$$\widetilde{b}_{j}(n,c_{\nu},\lambda)=
\begin{cases}
	   \phantom{+} \widetilde{K}_{n}(n+c_{\nu},\lambda-1) & \text{if $j=0$,}\\
	   \phantom{+} \displaystyle\frac{\lambda}{j}\widetilde{b}_{j-1}(n,c_{\nu},\lambda-1) & \text{if $0<j\le N$,}\\
	   \phantom{+} 0 & \text{if $j>N$}.
	 \end{cases}$$
When $\nu=0$ we have that $\widetilde{b}_{j}(n,c_{0},\lambda)=b_{j}(n,c_{0},\lambda)$, for $j\le N$. As before, the $\widetilde{b}_{j}(n,c_{\nu},\lambda)$ will be approximates of $b_{j}(n,c_{\nu},\lambda)$ for $\nu\ge1$ due to \eqref{chain}. In any case, we proceed as in \eqref{chaindecomposition} and set  
$$\widetilde{K}_{n}(u,\lambda)=\sum_{\nu=0}^{\infty}\chi_{\nu}(u)\widetilde{K}_{n}(u,\lambda;\nu).$$
The purpose of this section is to bound the error between $K_{n}(u,\lambda)$ and $\widetilde{K}_{n}(u,\lambda)$. Thus, we prove 
\begin{lemma}\label{L:chain error1}
If $n\ge0$, $\nu\ge0$, $0\le\lambda<N$, $c_{\nu}\le19$, and $N\ge2$, then
\begin{equation}\label{newboundforerror}
\left|K_{n}(u,\lambda;\nu)-\widetilde{K}_{n}(u,\lambda;\nu)\right|\le\frac{\lambda! M_{\nu,\lambda}}{2^{N-\lambda}},
\end{equation}
where $M_{\nu,0}=M_{\nu}$, and
\begin{equation}\label{recursiveM}
M_{\nu,\lambda}=\sum_{k=0}^{\nu}\left(c_{k+1}-c_{k}\right)M_{k,\lambda-1}=\frac{1}{2}\sum_{k=0}^{\nu}\left(\frac{3}{2}\right)^{k}M_{k,\lambda-1}.
\end{equation}
\end{lemma}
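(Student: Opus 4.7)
The plan is to proceed by induction on $\lambda$, with the base case $\lambda=0$ being precisely Lemma \ref{L:chain error1 l=0} (by definition $M_{\nu,0}=M_{\nu}$). Inside the inductive step, I run an inner induction on $\nu$ that mirrors the proof of Lemma \ref{L:chain error1 l=0}. The engine driving everything is the recursion \eqref{gen coefficient lemma}: for $j\ge 1$,
$$b_{j}(n,c,\lambda)=\frac{\lambda}{j}\,b_{j-1}(n,c,\lambda-1),$$
and the approximations $\widetilde{b}_{j}(n,c_{\nu},\lambda)$ satisfy the same relation identically for $1\le j\le N$. Subtracting, the coefficient error at level $\lambda$ reduces to the coefficient error at level $\lambda-1$:
$$|b_{j}(n,c_{\nu},\lambda)-\widetilde{b}_{j}(n,c_{\nu},\lambda)|=\frac{\lambda}{j}\,|b_{j-1}(n,c_{\nu},\lambda-1)-\widetilde{b}_{j-1}(n,c_{\nu},\lambda-1)|.$$

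Before running the induction I would establish an auxiliary pointwise bound on $|b_{j}(n,c,\lambda)|$, analogous to Lemma \ref{L:newboundforb}, by iterating \eqref{gen coefficient lemma} all the way down to level $0$: for $j\ge\lambda$ one has $b_{j}(n,c,\lambda)=\frac{\lambda!(j-\lambda)!}{j!}\,b_{j-\lambda}(n,c)$, hence, in the range $c_{\nu}\le 19$ and $j-\lambda\ge 2$,
$$|b_{j}(n,c_{\nu},\lambda)|\le\frac{4\,\lambda!(j-\lambda)!}{j!\,(1+c_{\nu})^{j-\lambda}}.$$
This is the $\lambda$-analog of the ingredient used to handle truncation tails in the $\lambda=0$ proof.

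With these tools in hand, for fixed $\lambda\ge 1$ I induct on $\nu$. The case $\nu=0$ only involves the truncation tail of $K_{n}(u,\lambda;0)$ and is handled by the auxiliary bound. For $\nu\ge 1$ I split
$$|K_{n}(u,\lambda;\nu)-\widetilde{K}_{n}(u,\lambda;\nu)|\le|b_{0}-\widetilde{b}_{0}|+\sum_{j=1}^{N}|b_{j}-\widetilde{b}_{j}|(c_{\nu+1}-c_{\nu})^{j}+\sum_{j>N}|b_{j}|(c_{\nu+1}-c_{\nu})^{j}.$$
The first term is the carry-over from the previous circle at the same level $\lambda$, bounded by the inner inductive hypothesis on $\nu$ via $b_{0}(n,c_{\nu},\lambda)=K_{n}(n+c_{\nu},\lambda;\nu-1)$. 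For the middle sum, the coefficient recursion above lets me factor out $\lambda(c_{\nu+1}-c_{\nu})$ and reduce to the coefficient-error sum at level $\lambda-1$; majorizing by the full series and applying the outer inductive hypothesis (together with the auxiliary tail bound at level $\lambda-1$) produces a bound of shape $\lambda(c_{\nu+1}-c_{\nu})\cdot(\lambda-1)!\,M_{\nu,\lambda-1}/2^{N-\lambda+1}$. The third sum is controlled by the auxiliary bound and a geometric-series estimate as in Lemma \ref{L:chain error1 l=0}.

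The principal obstacle is the bookkeeping: one must verify that the three contributions assemble, via the recurrence $M_{\nu,\lambda}=\sum_{k=0}^{\nu}(c_{k+1}-c_{k})M_{k,\lambda-1}$, into precisely $\lambda!\,M_{\nu,\lambda}/2^{N-\lambda}$. In particular, the $\lambda$ produced by the recursion must combine with the $(\lambda-1)!$ from the outer hypothesis to yield $\lambda!$, and the weight $c_{\nu+1}-c_{\nu}=\tfrac{1}{2}(3/2)^{\nu}$ must supply both the factor appearing in the definition of $M_{\nu,\lambda}$ and the extra $\tfrac{1}{2}$ that shifts the denominator from $2^{N-\lambda+1}$ to $2^{N-\lambda}$. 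A $\lambda$-version of Lemma \ref{gr prelim lemma} is not really needed, since the recursion already expresses the level-$\lambda$ coefficient error directly in terms of the level-$(\lambda-1)$ error. Modulo these arithmetic verifications, the argument runs in direct parallel to the proof of Lemma \ref{L:chain error1 l=0}.
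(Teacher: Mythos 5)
Your outer/inner induction and the reduction of coefficient errors via \eqref{gen coefficient lemma} are fine as far as they go, but the key step for the middle sum does not follow. After factoring out $\lambda(c_{\nu+1}-c_{\nu})$ you are left with $\sum_{i\le N-1}\left|b_{i}(n,c_{\nu},\lambda-1)-\widetilde{b}_{i}(n,c_{\nu},\lambda-1)\right|(c_{\nu+1}-c_{\nu})^{i}$, a sum of \emph{absolute values} of coefficient differences. The outer inductive hypothesis \eqref{newboundforerror} at level $\lambda-1$ only bounds the signed series $K_{n}(u,\lambda-1;\nu)-\widetilde{K}_{n}(u,\lambda-1;\nu)$ for $u$ in $I_{\nu}$; it says nothing about the sum of the absolute coefficient errors, and no sign information is available that would let you pass from one to the other. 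So ``majorizing by the full series and applying the outer inductive hypothesis'' is precisely the missing step. To repair your route you would need a coefficientwise error bound at level $\lambda-1$ --- for instance by iterating \eqref{gen coefficient lemma} all the way down to level $0$ and combining Lemma \ref{prelim lemma} with the $j=0$ errors $\left|b_{0}(n,c_{\nu},\mu)-\widetilde{b}_{0}(n,c_{\nu},\mu)\right|$ at the intermediate levels $\mu<\lambda$ --- that is, exactly the $\lambda$-analogue of Lemma \ref{gr prelim lemma}/Lemma \ref{prelim lemma} that you declare unnecessary, after which the bookkeeping against \eqref{recursiveM} would have to be redone.

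The paper avoids this issue by working with functions rather than coefficients: using Lemma \ref{E:DerivativeLemma} and the definition of the $\widetilde{b}_{j}(n,c_{\nu},\lambda)$, the difference $K_{n}(u,\lambda;\nu)-\widetilde{K}_{n}(u,\lambda;\nu)$ is written as the previous-circle error $K_{n}(n+c_{\nu},\lambda;\nu-1)-\widetilde{K}_{n}(n+c_{\nu},\lambda;\nu-1)$ (the $j=0$ term) plus $\lambda\int_{n+c_{\nu}}^{u}\bigl(K_{n}(t,\lambda-1;\nu)-\widetilde{\widetilde{K}}_{n}(t,\lambda-1;\nu)\bigr)dt$, where $\widetilde{\widetilde{K}}$ is the truncation at $N-1$. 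The signed, function-level inductive bound at level $\lambda-1$ then applies directly to the integrand; the passage from $2^{N-\lambda+1}$ to $2^{N-\lambda}$ comes from replacing $N$ by $N-1$ there, not from an extra $\tfrac12$ supplied by $c_{\nu+1}-c_{\nu}$ (that weight is entirely consumed by the recursion $M_{\nu,\lambda}=M_{\nu-1,\lambda}+(c_{\nu+1}-c_{\nu})M_{\nu,\lambda-1}$, so there is no slack for a separate tail term either; the truncation tail is already contained in the level-$(\lambda-1)$ function difference). You should either prove the coefficientwise lemma your argument needs or switch to this integral form of the error.
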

\begin{proof}
The proof will proceed by induction on both $\nu$ and $\lambda$. The case $\lambda=0$ has already been shown in Lemma \ref{L:chain error1 l=0}. When $\nu=0$, we will take $\widetilde{b}_{j}(n,c_{0},\lambda)=b_{j}(n,c_{0},\lambda)$, so if $0\le\lambda<N$, we can make repeated use of \eqref{gen coefficient lemma} to see that
\begin{align*}
K_{n}(u,\lambda;0)-\widetilde{K}_{n}(u,\lambda;0)&=\sum_{j>N}b_{j}(n,c_{0},\lambda)(u-(n+c_{0}))^{j}\\
&=\sum_{j>N}\frac{\lambda}{j}\cdot\frac{\lambda-1}{j-1}\cdots\frac{1}{j-\lambda+1}b_{j-\lambda}(n,c_{0},0)(u-(n+c_{0}))^{j}.
\end{align*}
Thus, using \eqref{E:newboundforb},
\begin{align*}
\left|K_{n}(u,\lambda;0)-\widetilde{K}_{n}(u,\lambda;0)\right|&\le\lambda!(c_{1}-c_{0})^{\lambda}\sum_{j>N}\left|b_{j-\lambda}(n,c_{0},0)\right|(c_{1}-c_{0})^{j-\lambda}\\
&\le\lambda!(c_{1}-c_{0})^{\lambda}\frac{4}{2^{N-\lambda}}\le\frac{\lambda!(c_{1}-c_{0})^{\lambda}M_{0,0}}{2^{N-\lambda}}=\frac{\lambda!M_{0,\lambda}}{2^{N-\lambda}}.
\end{align*}
We have shown that \eqref{newboundforerror} holds for $\lambda=0$. To prove \eqref{newboundforerror}, observe that $K_{n}(u,\lambda;\nu)-\widetilde{K}_{n}(u,\lambda;\nu)$ can be rewritten using Lemma \ref{E:DerivativeLemma}, and \eqref{tildaKdef} as
$$K_{n}(n+c_{\nu},\lambda;\nu-1)-\widetilde{K}_{n}(n+c_{\nu},\lambda;\nu-1)+\lambda\int_{n+c_{\nu}}^{u}K_{n}(t,\lambda-1;\nu)-\widetilde{\widetilde{K}}_{n}(t,\lambda-1;\nu)dt,$$
where $\widetilde{\widetilde{K}}$ is $\widetilde{K}$ with $N$ replaced by $N-1$. The first two terms above correspond to the $j=0$ term of the power series expansion. Finally, the bound in \eqref{newboundforerror} follows since
\begin{align*}
\left|K_{n}(u,\lambda;\nu)-\widetilde{K}_{n}(u,\lambda;\nu)\right|&\le\frac{\lambda!M_{\nu-1,\lambda}}{2^{N-\lambda}}+\lambda\int_{n+c_{\nu}}^{u}\frac{(\lambda-1)!M_{\nu,\lambda-1}}{2^{N-\lambda}}dt\\
&\le\frac{\lambda!}{2^{N-\lambda}}\left(M_{\nu-1,\lambda}+(c_{\nu+1}-c_{\nu})M_{\nu,\lambda-1}\right)=\frac{\lambda!M_{\nu,\lambda}}{2^{N-\lambda}}.
\end{align*}
\end{proof}
Although the presence of the $\lambda!$ term in \eqref{newboundforerror} looks menacing, we plan on taking $\lambda<10$. In addition, we will take $N$ to be much larger than $\lambda$, say $N=80$, so the error will still be well under control. In the next section, we will apply this theorem to approximate $j'_{\kappa}$.

\section{Approximating $j'_{\kappa}(u)$ in the Main Computation}

Recall the integrals
\begin{align}
\mathcal{I}_{1}&=\int_{0}^{u}P\left(w\right)^{2}j_{\kappa}'\left(u-w\right)dw,\\
\mathcal{I}_{2}&=\int_{1}^{u}\int_{0}^{1}\left(P\left(w\right)-P\left(w-t\right)\right)^{2}\frac{dt}{t}j_{\kappa}'\left(u-w\right)dw,\\
\mathcal{I}_{3}&=\int_{0}^{1}\int_{0}^{w}\left(P\left(w\right)-P\left(w-t\right)\right)^{2}\frac{dt}{t}j_{\kappa}'\left(u-w\right)dw,\\
\mathcal{I}_{4}&=\int_{0}^{1}\int_{w}^{1}P\left(w\right)^{2}\frac{dt}{t}j_{\kappa}'\left(u-w\right)dw.
\end{align}
If $\mathcal{I}_{1}-\kappa\mathcal{I}_{2}-\kappa\mathcal{I}_{3}-\kappa\mathcal{I}_{4}>0$, then a positive lower bound for $S(\mathcal{A},\mathcal{P},z)$ is obtained. To compute these integrals, define
\begin{align}
\widetilde{\mathcal{I}}_{1}&=\int_{0}^{u}P\left(w\right)^{2}\widetilde{j_{\kappa}'}\left(u-w\right)dw,\label{E:cal I1 tilda}\\
\widetilde{\mathcal{I}}_{2}&=\int_{1}^{u}\int_{0}^{1}\left(P\left(w\right)-P\left(w-t\right)\right)^{2}\frac{dt}{t}\widetilde{j_{\kappa}'}\left(u-w\right)dw,\label{E:cal I2 tilda}\\
\widetilde{\mathcal{I}}_{3}&=\int_{0}^{1}\int_{0}^{w}\left(P\left(w\right)-P\left(w-t\right)\right)^{2}\frac{dt}{t}\widetilde{j_{\kappa}'}\left(u-w\right)dw,\label{E:cal I3 tilda}\\
\widetilde{\mathcal{I}}_{4}&=\int_{0}^{1}\int_{w}^{1}P\left(w\right)^{2}\frac{dt}{t}\widetilde{j_{\kappa}'}\left(u-w\right)dw,\label{E:cal I4 tilda}
\end{align}
where
$$\widetilde{j'}_{\kappa}(u)=\frac{e^{-\kappa\gamma}}{\Gamma(\kappa)}\sum_{0\le n<u}(-\kappa)^n\widetilde{K}_{n}(u,\kappa-1).$$
Set $I=\mathcal{I}_{1}-\kappa\mathcal{I}_{2}-\kappa\mathcal{I}_{3}-\kappa\mathcal{I}_{4}$, and $\widetilde{I}=\widetilde{\mathcal{I}}_{1}-\kappa\widetilde{\mathcal{I}}_{2}-\kappa\widetilde{\mathcal{I}}_{3}-\kappa\widetilde{\mathcal{I}}_{4}$. Naturally, the integral $\widetilde{I}$ approximates $I$. The computations below are performed with $N=80$ in the definition of $\widetilde{K_{n}}(u,\kappa-1)$. The error between $\widetilde{I}$ and $I$ is bounded using Lemma \ref{L:chain error1} in the last column.

	\begin{center}
Table 2. Sifting Limit Calculations\\
\vspace{0.125in}
	\begin{tabular}{c}
\begin{tabular}{|c|c|c|c|c|c|}\hline
$\kappa$&$\beta_{\kappa}$&$u$&$a$&$\widetilde{I}$&Error\\\hline
$2$&$4.516$&$1.7581$&$0.267671$&$2.9\times10^{-5}$&$6.3\times10^{-23}$\\\hline
$3$&$6.520$&$2.7601$&$0.262761$&$5.4\times10^{-6}$&$8.6\times10^{-22}$\\\hline
$4$&$8.522$&$3.7611$&$0.260302$&$2.3\times10^{-5}$&$1.2\times10^{-20}$\\\hline
$5$&$10.523$&$4.7617$&$0.258785$&$4.5\times10^{-5}$&$2.3\times10^{-19}$\\\hline
$6$&$12.524$&$5.7621$&$0.257739$&$6.7\times10^{-5}$&$4.9\times10^{-18}$\\\hline
$7$&$14.524$&$6.7623$&$0.256929$&$2.2\times10^{-5}$&$1.2\times10^{-16}$\\\hline
$8$&$16.524$&$7.76247$&$0.256318$&$9.3\times10^{-7}$&$3.9\times10^{-15}$\\\hline
$9$&$18.525$&$8.7627$&$0.255870$&$6.5\times10^{-5}$&$1.5\times10^{-13}$\\\hline
$10$&$20.525$&$9.7628$&$0.255468$&$4.8\times10^{-5}$&$6.7\times10^{-12}$\\\hline
\end{tabular}\label{table:sieve limits}
	\end{tabular}
	\end{center}
\vspace{12pt}
These computations verify the values appearing in Table \ref{table1ch2} for $\beta_{\kappa}$ given by the $\Lambda^{2}\Lambda^{-}$ sieve. These calculations conclude the proof of Theorem \ref{T:kappabiggerthan3}.

I would like to thank my advisor, Sid Graham, for supervising this work and taking the time to carefully read through this manuscript. 
\renewcommand{\baselinestretch}{1}\small\normalsize

\end{document}